\DeclareMathOperator\CC{{\it\mathbb C^n}}
\newtheorem{theorem}{Theorem}
\newtheorem{definition}[theorem]{Definition}
\newtheorem{corollary}[theorem]{Corollary}
\newtheorem{proposition}[theorem]{Proposition}
\newtheorem{remark}[theorem]{Remark}
\title{Differential tests for plurisubharmonic functions and Koch curves}
\subjclass[2010]{Primary 32W20; Secondary 30C62, 28A80, 31C10, 32U05, 35D40, 28A78}
\keywords{complex Monge-Amp\`ere operator, Hausdorff dimension,  Koch curve, strictly plurisubharmonic function, differential test, viscosity}
\author{S\l awomir Dinew}
\address{S\l awomir Dinew\\
Department of Mathematics and Computer Science\\
Jagiellonian University, Poland}\email{slawomir.dinew@im.uj.edu.pl}
\author{\.Zywomir  Dinew}
\address{\.Zywomir Dinew\\
Department of Mathematics and Computer Science\\
Jagiellonian University, Poland} \email{zywomir.dinew@im.uj.edu.pl}
\begin{document}
 
\begin{abstract}
	We study minimum sets of singular plurisubharmonic functions and their relation to upper contact sets. In particular we develop an algorithm checking when a naturally parametrized curve is such a minimum set. The case of  Koch curves is studied in detail. We also study the size of the set of upper non-contact points. We show that this set is always of Lebesgue measure zero thus answering an open problem in the viscosity approach to the complex Monge-Amp\`{e}re equation. Finally, we prove that similarly to the case of convex functions, strictly plurisubharmonic lower tests yield existence of upper tests with a control on the opening.

\end{abstract}
\maketitle
\section{Introduction}Minimum sets of (strictly) plurisubharmonic functions appear naturally is several branches of complex analysis, such as   El Mir's theory of extension of closed positive currents or in the $\overline{\partial}$-Neumann theory (see \cite{DD16} and the references therein). We would  like to point out that they appear also in polynomial convexity (see \cite{Sto07}), where they are called totally real sets, and as the common boundary of two disjoint strongly pseudoconvex domains (see \cite{Sa82}). It is thus  interesting to investigate their structure and potential theoretic properties. For $\mathcal C^2$-smooth strictly plurisubharmonic functions this has been done by Harvey and Wells (see \cite{HW73}). Their result states that such minimum sets are contained in $\mathcal C^1$ totally real subvarieties. In \cite{DD16} we investigated these sets under the weaker condition that the possibly singular plurisubharmonic function has strictly positive Monge-Amp\`ere measure. In particular we showed that  in this setting the Hausdorff dimension can be larger than in the case considered by Harvey and Wells. The set itself can also contain nontrivial analytic subvarieties.

In the current note we continue our study initiated in \cite{DD16}. Our aim is twofold: first we  provide more explicit examples of fractal minimum sets and develop an algorithm for producing these. The second goal is to use them in problems related to viscosity theory for the complex Monge-Amp\`ere equation providing thus yet another application of the theory.

In \cite{DD16} we showed an example of a fractal Julia set which is of Hausdorff dimension strictly larger than one but is nevertheless a minimum set of a strictly subharmonic function. This raises a question how generic such examples are.

Even in the planar case, there are no known direct methods how to construct  a strictly subharmonic function that obtains a minimum precisely on a given set, unless the Green function is explicitly known, which is rarely the case. Therefore we propose a rather involved procedure, summarized in Theorem \ref{procedure1}, which can be applied in fairly broad range of  Jordan curves. 

The procedure goes as follows:
\begin{itemize}
	\item Obtain a natural parametrization of the set with strong control of constants. The parametrization being bi-H\"older with the H\"older exponent encoding the Hausdorff dimension;
	\item Such a parametrization forces the set to satisfy the {\it Ahlfors three point condition} with an estimate on the constant (and hence the set is a quasiconformal circle);
	\item  Use this estimate to obtain H\"older continuity up to the boundary of the conformal map from the interior of the quasiconformal circle to the unit disc (e.g. pass from quasiconformality to conformality). The same can be done for the corresponding exteriors;   
	\item Express the Green function by means of the conformal mapping, with control of the decay at the boundary;
	\item Having this, use the criterion from \cite{DD16} to decide whether the set is  a minimum set of a strictly subharmonic function and construct one.
\end{itemize}
Similar ideas were used in \cite{GH99}. The aforementioned criterion is:
\begin{theorem}[\cite{DD16}]\label{1dim}
	Let $K\subset\mathbb C$ be a regular, compact, connected set, with empty interior and not disconnecting the plane, satisfying $LS\left (\alpha\right)$ (\L ojasiewicz-Siciak condition) for $\alpha<2$. Then it is a minimum set of a global  strictly subharmonic function. If the set disconnects the plane, being topologically a circle, then the strictly subharmonic function is not global but can be defined on $\mathbb C\setminus\{a\}$ where $a$ is a point in the bounded component of the complement of $K$.

	If in turn for some point $w\in K$ one has $V_K\left (z\right)=O\left (|z-w|^{\alpha}\right)$ for
	$\alpha>2$, then $K$ cannot be a minimum set of any strictly subharmonic function.
\end{theorem}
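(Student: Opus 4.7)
The plan for the positive direction is to base the construction on a suitable nonlinear transform of the Siciak--Zaharyuta extremal function $V_K$. By the regularity hypothesis, $V_K$ is continuous on $\mathbb C$, vanishes on $K$, is harmonic on the complement, and has logarithmic growth at infinity. Setting $p:=2/\alpha>1$, the function $f:=V_K^p$ is subharmonic and satisfies $\{f=0\}=K$. Off $K$ one computes
\[
\Delta f \;=\; p(p-1)\,V_K^{p-2}\,|\nabla V_K|^2 .
\]
The LS$(\alpha)$ lower bound $V_K(z)\geq c\,\mathrm{dist}(z,K)^\alpha$, combined with a matching gradient estimate $|\nabla V_K(z)|\gtrsim \mathrm{dist}(z,K)^{\alpha-1}$ that I would derive from the conformal representation $V_K=\log|\Phi|$ on the complement together with standard distortion bounds for the uniformising map $\Phi$, inserted into the formula above, would give $\Delta f\geq c_0>0$ throughout some tubular neighbourhood $U$ of $K$.

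To upgrade this to a globally strictly subharmonic function, I would enclose $K$ in a disc $B(0,R)$ and define
\[
F(z):=\max\!\bigl(V_K(z)^p,\;\varepsilon(|z|^2-R^2)\bigr),
\]
with $\varepsilon>0$ chosen small enough that the coincidence set $\{F=f\}$ stays inside $U$; this is feasible because $f$ is bounded below by a positive constant on $\partial U$ while the auxiliary quadratic grows. Each branch is strictly subharmonic in the region where it realises the maximum, so $F$ is strictly subharmonic globally, equals $0$ exactly on $K$, and is positive elsewhere. If $K$ separates the plane, $V_K\equiv 0$ on the bounded component, so instead I would replace $V_K^p$ there by a suitable power of the Green function with pole at the designated interior point $a$, and glue by the same $\max$-construction on $\mathbb C\setminus\{a\}$.

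For the obstruction, suppose for contradiction that $u$ is strictly subharmonic with minimum set $K$; normalise $u\geq 0$ with $u|_K\equiv 0$. Strict subharmonicity yields $\varepsilon>0$ with $u(z)-\tfrac{\varepsilon}{4}|z-w|^2$ subharmonic, so the mean value inequality at $w$ forces $\max_{\partial B(w,r)}u\geq \tfrac{\varepsilon}{4}r^2$ for all small $r$. Separately, for any $R$ with $K\Subset B(0,R)$, the function $u - c_R V_K$ with $c_R:=(\max_{\partial B(0,R)}u)/(\min_{\partial B(0,R)}V_K)$ is subharmonic on $B(0,R)\setminus K$ with nonpositive boundary values, so the maximum principle gives $u\leq c_R V_K$ on $B(0,R)\setminus K$. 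Substituting the hypothesis $V_K(z)\leq C|z-w|^\alpha$ yields $\tfrac{\varepsilon}{4}r^2\leq c_R C\,r^\alpha$, and letting $r\to 0^+$ contradicts $\alpha>2$.

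The main obstacle is the lower gradient estimate for $V_K$ required in the positive direction; the LS$(\alpha)$ hypothesis is only a pointwise bound on $V_K$ itself, and upgrading it to a matching bound on $|\nabla V_K|$ demands a nondegeneracy input such as conformal distortion asymptotics or a boundary Harnack principle. Once that bound is available, the remaining work amounts to careful parameter choice in the two $\max$-constructions, after which the obstruction half is a short comparison via the maximum principle.
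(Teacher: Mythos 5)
Your obstruction half is correct and is essentially the standard comparison argument (strict subharmonicity forces $\max_{\partial B(w,r)}u\ge\tfrac{\varepsilon}{4}r^{2}$, while the maximum principle applied to $u-c_RV_K$ on $B(0,R)\setminus K$ gives $u\le c_RV_K=O(r^{\alpha})$, contradicting $\alpha>2$), and your positive half follows the same route as the cited source: make $V_K^{2/\alpha}$ strictly subharmonic near $K$ and then glue. However, there is a genuine gap exactly at the step you yourself flag. With $p=2/\alpha$ and the relevant range $1<\alpha<2$ one has $p-2<0$, so in $\Delta(V_K^{p})=p(p-1)V_K^{p-2}|\nabla V_K|^{2}$ the LS$(\alpha)$ lower bound $V_K\ge c\,\mathrm{dist}(z,K)^{\alpha}$ gives an \emph{upper} bound on the factor $V_K^{p-2}$, not a lower one, and your proposed gradient bound $|\nabla V_K|\gtrsim\mathrm{dist}(z,K)^{\alpha-1}$ is too weak to close the estimate: to bound $V_K^{p-2}$ from below you would need an upper bound $V_K\le C\,\mathrm{dist}(z,K)^{s}$, and even with the best conceivable $s=1$ you only get $\Delta(V_K^{p})\gtrsim\mathrm{dist}(z,K)^{\,2/\alpha+2\alpha-4}$, whose exponent is strictly positive for $\alpha\in(1,2)$ (it vanishes only at $\alpha=1$), so the bound degenerates as $z\to K$ instead of giving the claimed uniform $\Delta f\ge c_{0}>0$.

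The input that actually works --- and the one used in \cite{DD16} and quoted verbatim in the present paper in the proof of Theorem \ref{difftest} --- is the distortion estimate in ratio form, $|\nabla V_K(z)|\ge c\,V_K(z)/\mathrm{dist}(z,K)$ near $K$ (Koebe-type distortion applied to $V_K=\log|(f^{*})^{-1}|$), which preserves the structure $\Delta(V_K^{p})\ge p(p-1)c^{2}\,V_K^{p}/\mathrm{dist}(z,K)^{2}$; only at this point does LS$(\alpha)$ enter, and since $p\alpha=2$ the powers of the distance cancel exactly, yielding the uniform positive lower bound. So the missing idea is not ``some nondegeneracy statement for $|\nabla V_K|$'' in the power-of-distance form you propose, but precisely the ratio bound $|\nabla V_K|\gtrsim V_K/\mathrm{dist}$, after which your remaining steps (the $\max$ with $\varepsilon(|z|^{2}-R^{2})$ to globalize, and the inversion/Green-function substitute inside the bounded component in the circle case, which is exactly how $\tilde V_{\Pi_{\theta}}$ is assembled in this paper) go through in line with the paper's construction.
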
 

 The main problem when using Theorem \ref{1dim} is how to check the \L ojasiewicz-Siciak condition without knowing the Green function explicitly in advance. This is why  one has to be very careful when performing estimates in our procedure.
 
 In Theorem \ref{1dim} we typically  think of sets $K$ of fractal type or, for example, of Brownian trees or dendrites.
 \begin{figure}[h]
 	\includegraphics[scale=0.5]{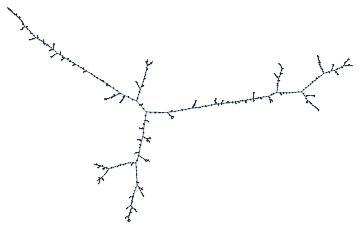}
 	\caption*{a  dendrite}
 \end{figure}

Next we provide such a natural bi-H\"older parametrization of the family of Koch curves (Theorem \ref{kochparametrization}). This improves the corresponding estimates  form \cite{Po93}, using similar ideas. The Koch curves are among the best-known fractal curves, providing (counter)examples to many problems in measure theory and dynamics and therefore our parametrization may find further applications.  

The second main theme in the note are the {\it upper contact sets} that appear in viscosity theory. Recall that these are the points $x_0$ where a fixed plurisubharmonic function $u$ admits a $\mathcal C^2$ differential test from above i.e. a local $\mathcal C^2$ smooth function $q$ such that $q(x)\geq u(x)$ with equality at $x_0$. 

The viscosity method was introduced in \cite{CL83} to deal with Hamilton-Jacobi equations and initial developments focused on first-order equations. Soon it was realized that the methods apply also to second-order nonlinear PDE's, but still only in the real setting. Recently  there has been much interest in applying these techniques in the complex setting with some spectacular results (see \cite{EGZ11}).

The viscosity theory intuitively boils down to exchanging the possibly singular function $u$ by its smooth majorant thus working with genuinely smooth objects. The price to pay is that at points where $u$ does not admit differential tests (the {\it non-contact set}) one has no control. It is thus desirable to prove that the non-contact set is small.

It is easy to see (see \cite{Zer13}) that the contact set is always dense. The measure theoretic properties of the non-contact sets are substantially subtler. In fact, one  of the open problems  posed in \cite{DGZ16} (Question $39$ there), is to characterize the set $E$ on which a plurisubharmonic function allows differential tests from above. It is trivial to see that such a set can be non-(pluri)polar (consider the function $\max\{|z|,1\}$- here the unit circle is the upper non-contact set). The measure theoretic  properties of the upper non-contact sets  are, however, largely unknown. 

We prove  that the upper non-contact set can have big Hausdorff dimension (Theorem \ref{difftest}, the  examples being produced from Koch curves), yet its Lebesgue  measure is always zero (Theorem \ref{trudinger}). It remains an interesting problem to compute the maximal Hausdorff dimension that can occur among these sets.

Finally we investigate when an upper-differential test exist if no smoothness assumptions are posed. Below we recall an important fact in the real theory of the Monge-Amp\`ere operator proven by Caffarelli (\cite{Caf90}, see also \cite{Gut}, Lemma 6.2.1).

\begin{theorem} Let $\Omega$ be a domain in $\mathbb R^n$ and $v$ be a convex function solving the real Monge-Amp\`ere equation
	$$\det(D^2v)=f\geq 0$$
	in $\Omega$ with bounded right hand side $f$.  Suppose also that for some $x_0\in\Omega$ one has $v(x_0)=0$ and there is a constant $\sigma>0$, such that
	$$ u(x)\geq \sigma||x-x_0||^2,$$
	for any $x\in\Omega$.
	Then there  is a constant $\tau$ depending on $\sigma$, $n$ and the uniform bound of $f$, such that
	$$v(x)\leq \tau||x-x_0||^2$$
	for any $x$ sufficiently close to $x_0$.
\end{theorem}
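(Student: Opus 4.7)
The plan is to reduce the desired quadratic upper bound to Aleksandrov's maximum principle applied to sublevel sets of $v$. Place $x_0$ at the origin so that $v(0)=0$ is the minimum. For small $t>0$, introduce the section
$$S_t := \{x \in \Omega : v(x) \le t\},$$
which is convex as a sublevel set of a convex function. The hypothesis $v(x)\ge \sigma\|x\|^2$ forces $S_t \subset B(0,\sqrt{t/\sigma})$, so for $t$ small enough $S_t$ is a bounded convex subdomain of $\Omega$. On $S_t$ the function $w := v-t$ is convex, continuous, and vanishes on $\partial S_t$, so Aleksandrov's maximum principle gives
$$t^n \;=\; |w(0)|^n \;\le\; C_n\,\operatorname{diam}(S_t)^{n-1}\,\operatorname{dist}(0,\partial S_t)\,\mathrm{MA}(v)(S_t).$$

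Each factor on the right is then bounded purely from the data. The quadratic lower bound yields both $\operatorname{diam}(S_t)\le 2\sqrt{t/\sigma}$ and $|S_t|\le C_n(t/\sigma)^{n/2}$; combining the latter with $\det(D^2 v)=f\le M$ gives $\mathrm{MA}(v)(S_t)\le C_n M(t/\sigma)^{n/2}$. For the distance factor, fix $y$ near the origin and set $t:=v(y)$. Since $v$ attains its unique minimum at $0$, the level set $\{v=t\}$ has empty interior, hence $y\in\partial S_t$ and $\operatorname{dist}(0,\partial S_t)\le\|y\|$.

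Substituting into the Aleksandrov bound collects to
$$t^n \;\le\; C_n'\,M\,\sigma^{-(2n-1)/2}\,\|y\|\;t^{(2n-1)/2},$$
and since $n-(2n-1)/2=1/2$, the powers of $t$ rearrange to $t^{1/2}\le C(n,M,\sigma)\|y\|$, i.e.\ $v(y)\le\tau\|y\|^2$ with $\tau=\tau(n,M,\sigma)$. This conclusion is valid whenever $y$ is close enough to the origin that $B(0,\sqrt{t/\sigma})\subset\Omega$, which is automatic in a sufficiently small neighborhood since $v$ is continuous and vanishes at $0$.

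The main obstacle is conceptual rather than computational: the hypothesis provides only a one-sided $L^\infty$ bound on $f$ (no positivity), so classical strict-convexity or normalization arguments of Caffarelli--Guti\'errez type, which usually require $f$ bounded away from zero, are not immediately available. Aleksandrov's inequality is the right tool here precisely because it converts the quadratic \emph{lower} bound into sharp geometric control of the section $S_t$ while demanding only an \emph{upper} bound on its Monge--Amp\`ere mass, which is exactly the information we have.
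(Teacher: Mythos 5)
Your proof is correct, and it is essentially the argument the paper itself points to rather than reproves: Aleksandrov's maximum principle applied to $w=v-t$ on the sublevel set $S_t=\{v\le t\}$, with the quadratic lower bound giving $\operatorname{diam}(S_t)\le 2\sqrt{t/\sigma}$ and $\mathrm{MA}(v)(S_t)\le C_nM(t/\sigma)^{n/2}$, and with $y\in\partial S_t$ (equivalently, the segment from $x_0$ to $y$ meeting $\partial\{v<t\}$ at distance at most $\|y\|$) controlling the distance factor, is exactly the section-based proof of Caffarelli and of Guti\'errez (Lemma 6.2.1) that the paper cites, the paper noting only that the Aleksandrov maximum principle and sections are the crucial tools there. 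The paper's own work is the complex analogue (Theorem \ref{blocki}), proved by different means (B\l ocki's $L^\infty$ stability estimates and the co-area formula), so no comparison beyond this is needed.
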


We wish to emphasize that this theorem has important implications in the study of the regularity of solutions to the real Monge-Amp\`ere equation (see \cite{Caf90}, \cite{Gut}).

In the complex setting a lot of the real tools are not available. In particular the Aleksandrov maximum principle and the theory of sections (which are crucial in the proof) are missing. Nevertheless we prove the analogue of this result in the complex setting (Theorem \ref{blocki}).

	 \section{Preliminaries}
 
\subsection{Notions related to holomorphic functions in the plane }
\begin{definition}
	We say that a holomorphic function  $f:\Omega\to \mathbb C$ on a bounded domain $\Omega$ is H\"older continuous up to the boundary with exponent $\alpha$ ($f\in Lip(\alpha)$) if it extends continuously to the boundary of $\Omega$, the extension being also denoted by $f$, and one can find a constant $C$ such that for any $z,w\in \overline{\Omega}$ it holds  $|f(z)-f(w)|<C|z-w|^{\alpha}$. If $\Omega$ is unbounded but $\partial\Omega$ is compact, then the inequality is understood ''for $z$ and $w$ finite'', that is, for any bounded $U\supset\partial \Omega$ one can find a constant $C$ such that $|f(z)-f(w)|<C|z-w|^{\alpha}$, for any $z,w\in U\cap \overline{\Omega}$.
	\end{definition}
	
	It is clear that if $f\in Lip(\alpha)$, then also  $f\in Lip(\alpha')$, for $0<\alpha'<\alpha$.
	\begin{definition}
		Let $\Gamma$ be a  Jordan curve in the complex plane. It is said to satisfy the Ahlfors three point condition with constant $c\geq1$ if  there exist a constant  $\delta>0$, such that for any two points $z_1,\ z_3\in\Gamma$, $|z_1-z_3|\leq\delta$ and any point $z_2$ lying on the the arc joining $z_1$ and $z_3$ having smaller diameter (since one can not claim smaller length due to possible non-rectifiability) there holds
		$$|z_1-z_2|+|z_2-z_3|\leq c|z_1-z_3|.$$
	\end{definition}
		It is known that this property is equivalent to $\Gamma$ being an image of the unit circle by a  quasiconformal mapping of the plane. The constant $c$ depends on $\delta$ and $c=c(\delta)$ is a non decreasing function. Clearly, if $\Gamma$ is a smooth curve then $\lim_{\delta\to0^{+}}c(\delta)=1$. On the other hand, the presence of a corner in $\Gamma$ implies that $c>1$.  As we will see later, also some non rectifiable curves satisfy the Ahlfors three point condition with some finite $c$. The Ahlfors three point condition for Jordan arcs is defined analogously.

\begin{theorem}[Lesley\cite{Le82}]\label{thm1}
 Let $\Gamma$ be a Jordan curve in the complex plane, satisfying the Ahlfors three point condition with constant $c$. 
Then if $f$ and $f^{*}$ are the conformal maps from the unit disc onto the bounded domain bounded by $\Gamma$ (respectively from the complement of the unit disc onto the unbounded domain bounded by $\Gamma$) then the Carath\'eodory extensions of $f, f^{*}, f^{-1}$ and $(f^{*})^{-1}$ are H\"older continuous up to the corresponding boundaries with the following H\"older exponents:
\begin{equation*}
 f,\ f^*\in Lip\left(\frac{2\arcsin^2(c^{-1})}{\pi^2-\pi \arcsin(c^{-1})}\right);
\end{equation*}

\begin{equation*}
 f^{-1},  (f^*)^{-1}\in Lip\left(\frac{\pi}{2\pi-2\arcsin(c^{-1})}\right).
\end{equation*}

\end{theorem}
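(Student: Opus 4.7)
The overall strategy would be to exploit conformal invariance of the modulus of topological annuli, combined with the geometric restrictions imposed by the Ahlfors three point condition. Since $\Gamma$ is a Jordan curve, Carath\'eodory's theorem already guarantees that $f$, $f^{*}$ and their inverses extend to homeomorphisms between the respective closed domains; the real content of the statement is the quantitative H\"older bound with the arcsine-dependent exponent.

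First I would fix a boundary point $w_{0}\in\Gamma$ and study how $f^{-1}$ distorts small balls around $w_{0}$. For $0<r<R$ small compared with the $\delta$ from the Ahlfors condition, the region $A_{r,R}=\{z\in\Omega : r<|z-w_{0}|<R\}$ is a genuine topological annulus, because the subarc of $\Gamma$ inside $B(w_{0},r)$ is separated from the subarc outside $B(w_{0},R)$. The plan is to bound the modulus of $A_{r,R}$ from below, using the family of concentric circular arcs centered at $w_{0}$ as an admissible metric; this yields
\[
\operatorname{mod}(A_{r,R})\ \geq\ \frac{\theta(c)}{2\pi}\log(R/r),
\]
where $\theta(c)$ is a lower bound on the angular aperture of $A_{r,R}$ as seen from $w_{0}$. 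This is where the three point condition enters: any boundary arc joining two points $z_{1},z_{3}\in\Gamma$ with $|z_{1}-z_{3}|\leq\delta$ is trapped inside the closed ellipse with foci $z_{1},z_{3}$ and major axis $c|z_{1}-z_{3}|/2$, so the angle subtended by this arc at any outside observer is at least $2\arcsin(c^{-1})$.

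By conformal invariance the preimage $f^{-1}(A_{r,R})$ is a topological annulus in $\mathbb{D}$ of the same modulus, with one boundary component on $\partial\mathbb{D}$. The length-area method (or Beurling's projection theorem) tells us that the arc $I\subset\partial\mathbb{D}$ on the inner boundary of a collar of modulus $M$ has arclength at most $Ce^{-2\pi M}$. Substituting the previous modulus bound and optimizing in $R/r$ gives $|I|\leq Cr^{\pi/(2\pi-2\arcsin(c^{-1}))}$; reinterpreted as a H\"older estimate for $f^{-1}$ on $\partial\Omega$, this is exactly the claimed exponent. The argument for $f^{*}$ is identical after exchanging the roles of the inner and outer boundary components.

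For the direct maps $f$ and $f^{*}$ the plan is to reverse the roles: one estimates from above the modulus of a suitable quadrilateral in $\mathbb{D}$ separating two boundary arcs, and uses the three point condition in $\Omega$ to deduce a lower bound on the diameter of the image. The apparent loss in exponent, from $\arcsin(c^{-1})/\pi$ down to $\frac{2\arcsin^{2}(c^{-1})}{\pi^{2}-\pi\arcsin(c^{-1})}$, would come from combining a \emph{long} and a \emph{short} modulus estimate through the standard conformal map of a circular sector of opening angle $2(\pi-\arcsin(c^{-1}))$. I expect the main obstacle to be the precise bookkeeping of Koebe/Beurling distortion constants, and in particular verifying that the worst case configuration is the symmetric one in which the three point condition is saturated by an ellipse-like arc, so that $2\arcsin(c^{-1})$ is really the extremal angular opening and is not merely a lower bound that could be sharpened.
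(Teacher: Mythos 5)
The paper itself contains no proof of this statement: it is Lesley's theorem, quoted from \cite{Le82}, with the only added remark that Lesley's argument, written there for $f$ and $f^{-1}$, applies verbatim to $f^{*}$ and $(f^{*})^{-1}$ (compare \cite{Le83}). So your proposal can only be measured against Lesley's strip/extremal-length method, and while it is in the right spirit for the inverse maps, its quantitative core is garbled. The correct way the three point condition enters is this: if $z_1,z_3$ are the first exit points of $\Gamma$ from the circle $\{|z-w_0|=t\}$ on either side of $w_0$, then $2t=|z_1-w_0|+|w_0-z_3|\le c|z_1-z_3|=2ct\sin\bigl(\tfrac{1}{2}\angle z_1w_0z_3\bigr)$, so the angle \emph{at $w_0$} is at least $2\arcsin(c^{-1})$; hence the circular crosscut of $\Omega$ at radius $t$ has angular measure at most $\Theta:=2\pi-2\arcsin(c^{-1})$, and length--area gives the extremal distance $\lambda\ge\int_r^R \frac{dt}{t\,\theta(t)}\ge\frac{1}{\Theta}\log(R/r)$ --- an \emph{upper} bound on the aperture sitting in the denominator, not a lower bound in the numerator as in your displayed estimate $\operatorname{mod}\ge\frac{\theta(c)}{2\pi}\log(R/r)$. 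As written, your bound combined with $|I|\le Ce^{-2\pi\operatorname{mod}}$ yields the exponent $2\arcsin(c^{-1})$, not the claimed $\frac{\pi}{2\pi-2\arcsin(c^{-1})}$; moreover ``the angle subtended at any outside observer is at least $2\arcsin(c^{-1})$'' is false as stated (a distant observer sees an arbitrarily small angle). With these corrections the argument for $f^{-1}$ and $(f^{*})^{-1}$ does close and matches the corner model of interior angle $\Theta$, but one must also handle the topological bookkeeping (choosing the right component of $\Omega\cap\{|z-w_0|=t\}$ that every connecting curve must cross), which you pass over silently.

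The genuinely hard half --- $f,f^{*}\in Lip\bigl(\frac{2\arcsin^{2}(c^{-1})}{\pi^{2}-\pi\arcsin(c^{-1})}\bigr)$ --- is not proved in your sketch but only hoped for. ``Reversing the roles'' together with your corner/ellipse heuristic would naturally produce the exponent $\frac{2\arcsin(c^{-1})}{\pi}$ (interior angle at least $2\arcsin(c^{-1})$), which is \emph{not} what the theorem asserts: the stated exponent is that value multiplied by the extra factor $\frac{\arcsin(c^{-1})}{\pi-\arcsin(c^{-1})}$, and this loss is precisely where Lesley's strip-method estimates do real work, because the curve may oscillate at all scales rather than form a single corner, so the lower bound on the diameter of the image of a boundary arc cannot be read off a corner model. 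Your proposal neither identifies the two extremal-length estimates to be combined nor shows how their combination produces $\frac{2\arcsin^{2}(c^{-1})}{\pi^{2}-\pi\arcsin(c^{-1})}$; and your closing suggestion that the symmetric ellipse-like configuration is extremal would, if carried out, claim a stronger exponent than Lesley's theorem states. So for the direct maps there is a genuine gap: the mechanism generating the stated exponent is missing.
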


Actually, Lesley proved Theorem \ref{thm1} only for $f$ and $f^{-1}$ but the same argument applies to $f^{*}$ and $(f^{*})^{-1}$ (compare \cite{Le83}).
Note, that by an earlier paper by Lesley (\cite{Le79}, see also \cite{Be89} for a slightly more general result), under the above conditions on $\Gamma$, if $f\in Lip(\alpha)$ then $(f^*)^{-1}\in Lip(\frac{1}{2-\alpha})$,
and if $f^{*}\in Lip(\beta)$ then $(f)^{-1}\in Lip(\frac{1}{2-\beta})$.

The following immediate corollary will be useful in our considerations:
\begin{corollary}
 If $c\to 1^{+}$ for a family of curves, then both H\"older exponents converge to $1$. In particular for $c$ sufficiently close to $1$ all four mappings are
 in $Lip(1-\varepsilon)$ for any preassigned $\varepsilon>0$.
\end{corollary}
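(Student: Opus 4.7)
The plan is to reduce the claim to a direct continuity/limit computation applied to the two explicit H\"older exponents furnished by Theorem \ref{thm1}. Note that the first exponent depends on $c$ only through $\arcsin(c^{-1})$, and likewise for the second. As $c\to 1^{+}$ we have $c^{-1}\to 1^{-}$, and since $\arcsin$ is continuous at $1$ with $\arcsin(1)=\pi/2$, the quantity $\arcsin(c^{-1})$ tends to $\pi/2$. So the whole argument amounts to substituting this limit into the two expressions and checking that both evaluate to $1$.

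For the exponent governing $f$ and $f^{*}$, I would compute
\[
\lim_{c\to 1^{+}}\frac{2\arcsin^{2}(c^{-1})}{\pi^{2}-\pi\arcsin(c^{-1})}=\frac{2(\pi/2)^{2}}{\pi^{2}-\pi(\pi/2)}=\frac{\pi^{2}/2}{\pi^{2}/2}=1.
\]
For the exponent governing $f^{-1}$ and $(f^{*})^{-1}$,
\[
\lim_{c\to 1^{+}}\frac{\pi}{2\pi-2\arcsin(c^{-1})}=\frac{\pi}{2\pi-\pi}=1.
\]
Both expressions are continuous functions of $c$ in a one-sided neighbourhood of $1$ (since $\arcsin$ is continuous there and the denominators stay bounded away from $0$), so the limits are attained in the usual $\varepsilon$-sense. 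Consequently, given any $\varepsilon>0$, for $c$ sufficiently close to $1$ both exponents exceed $1-\varepsilon$, whence the monotonicity remark following the first definition (if $f\in Lip(\alpha)$ then $f\in Lip(\alpha')$ for $\alpha'<\alpha$) places all four mappings in $Lip(1-\varepsilon)$.

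There is essentially no obstacle here: the statement is a soft consequence of Theorem \ref{thm1}, and the only thing to verify is the single-variable limit at $\arcsin(c^{-1})=\pi/2$. No uniformity over the family of curves is required beyond the common bound $c$, since the H\"older exponents depend only on $c$.
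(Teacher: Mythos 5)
Your proof is correct and is exactly the argument the paper has in mind: the corollary is labeled "immediate" (with no proof given), and it indeed reduces to the single-variable limit $\arcsin(c^{-1})\to\pi/2$ as $c\to 1^{+}$ plugged into the two exponent formulas, together with the downward monotonicity of $Lip(\alpha)$ in $\alpha$. Nothing to add.
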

\begin{remark}
 Analogous results with conditions on cross-ratios of quadruples of points was proved by N\"akki and Palka \cite{NP80}. We shall use Lesley's
 theorem since the Ahlfors three point condition is easier to check.
\end{remark}

\begin{definition}
	Let $f:U\rightarrow\Omega$ be a homeomorphism between domains in the complex plane. Now $f$ is said to be $K$-quasiconformal for some $K\geq 1$ if for any $z\in U$
	$$\limsup_{r\rightarrow 0^{+}}\frac{max_{|h|=r}|f\left (z+h\right)-f\left (z\right)|}{min_{|h|=r}|f\left (z+h\right)-f\left (z\right)|}\leq K.$$
\end{definition}
\begin{definition} A quasicircle or quasiconformal circle is the image of the unit circle under a $K$-quasiconformal mapping, for some $K\geq 1$.
	\end{definition}
\subsection{Koch curves}Following \cite{Po93}, we define:
\begin{definition}For any $\theta\in(0,\frac{\pi}{4}]$, let $\Delta_{1}^{0}$ be the isosceles triangle with base angle $\theta$ and with   vertices $0,1$ and $\frac{1}{2}+i\frac{1}{2}\tan{\theta}$.
	The {\it Koch curve of angle} $\theta$ is $$\Gamma_{\theta}:=\bigcap_{n=1}^{\infty}\bigcup_{k=1}^{2^{n}}\Delta_{k}^{n},$$ where $\Delta_{k}^{n}$ are defined inductively as one of the two triangles obtained from the predecessor $\Delta^{n-1}_{\lceil\frac{k}{2}\rceil}$ which are similar to  it and the longest side $c_{k}^{n}$ of $\Delta_{k}^{n}$ is one of the lateral sides of  $\Delta^{n-1}_{\lceil\frac{k}{2}\rceil}$.
\end{definition}
\begin{figure}[h]
	\centering
	\begin{subfigure}[b]{0.5\textwidth}
		\includegraphics[width=\textwidth]{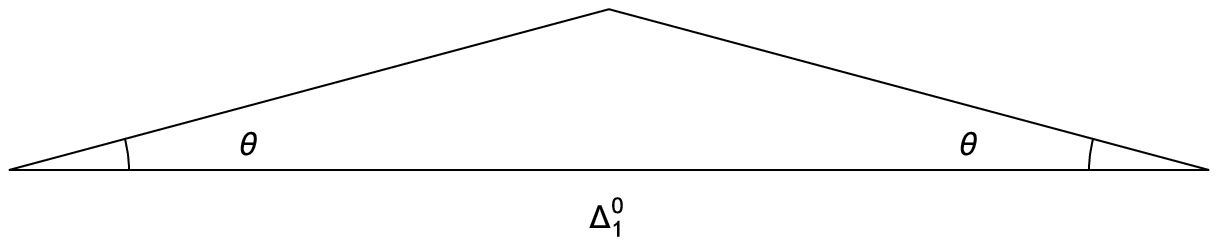}
		\end{subfigure}
	~ 
	\begin{subfigure}[b]{0.5\textwidth}
		\includegraphics[width=\textwidth]{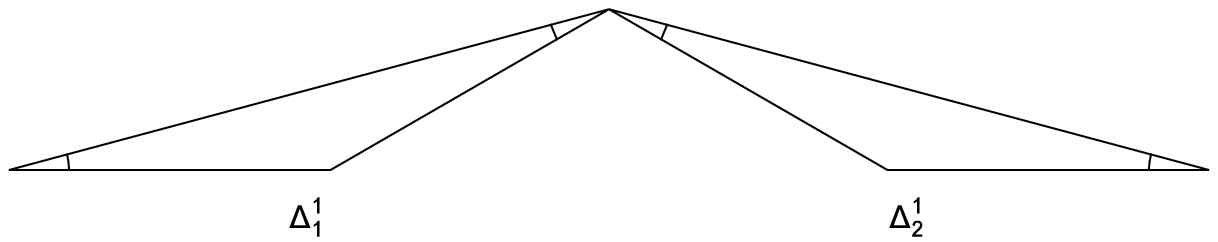}
		\end{subfigure}
	
\end{figure}
\begin{figure}[h]
	\centering
	\begin{subfigure}[b]{0.5\textwidth}
		\includegraphics[width=\textwidth]{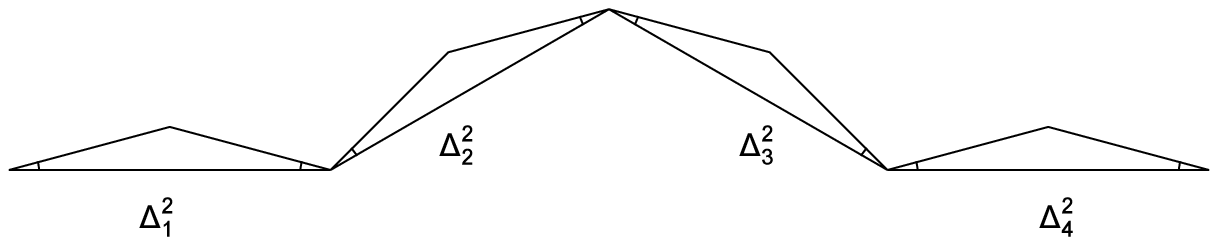}
	\end{subfigure}
	~ 
	\begin{subfigure}[b]{0.5\textwidth}
		\includegraphics[width=\textwidth]{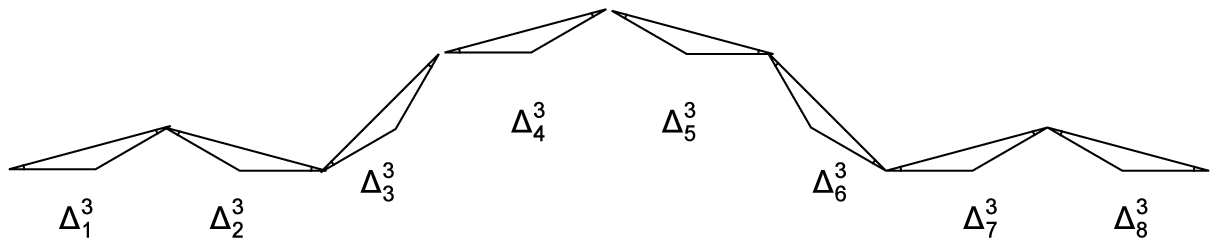}
	\end{subfigure}
	
\end{figure}
\begin{figure}[h]
	\centering
	\begin{subfigure}[b]{1.5\textwidth}
		\adjustbox{trim={.2\width} {.4\height} {0.2\width} {.4\height},clip}%
		{\includegraphics[width=\textwidth]{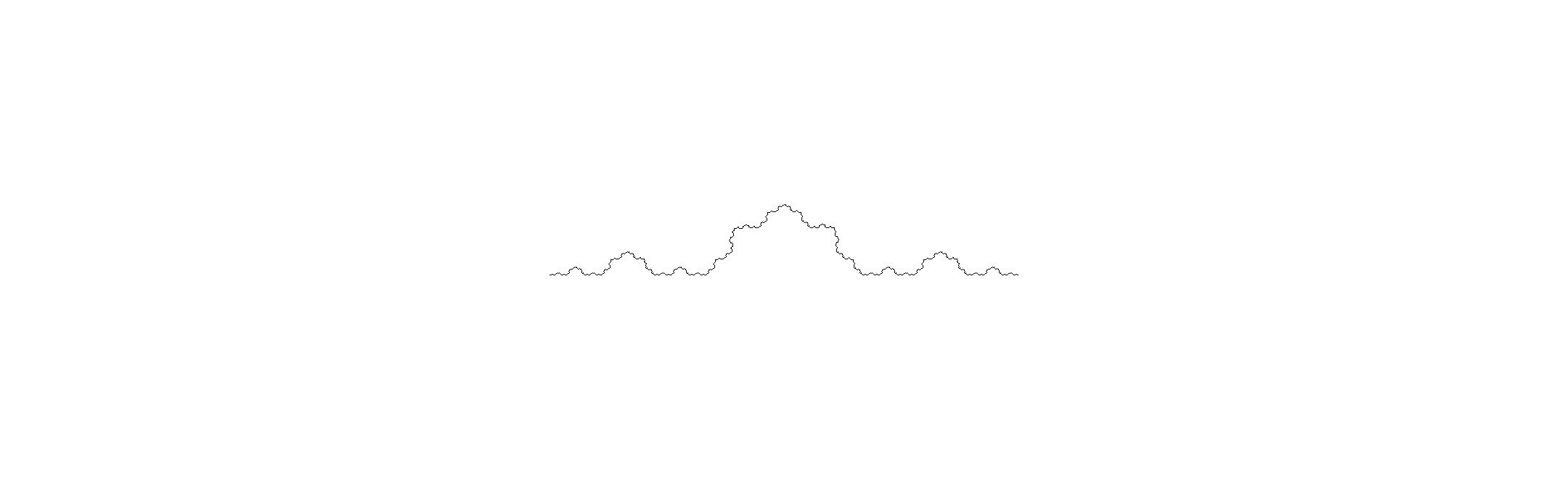}}
		 
	\end{subfigure}
	~ 
	\caption*{Sets approximating $\Gamma_{\theta}$}
\end{figure}

 Although we take the intersection of two dimensional objects, it can be proved that $\Gamma_{\theta}$ is a curve that is identical with the one obtained by the classical construction (usually the Koch curve is given as the limit of an approximating sequence of curves). Moreover, by construction, $\Gamma_{\theta}$ has no self-intersections (see also \cite{CP11}) and hence it is a homeomorphic image of the closed interval $[0,1]$.

An old result of Ponomarev (\cite{Po93}, see also \cite{Po07} for a small correction) states that the Koch curves $\Gamma_{\theta}$ admit a bi-H\"older parametrization 
$$A(\theta)|t_{1}-t_{2}|^{\gamma}\leq |\varphi(t_{1})-\varphi(t_{2})|\leq B(\theta)|t_{1}-t_{2}|^{\gamma},$$
where $\gamma=\gamma(\theta)=\frac{\log(2\cos\theta)}{\log2}$ and

$$A(\theta)=\begin{cases} \frac{1}{4\cos^2(\theta)}\ \ {\rm if}\ 0<\theta<\frac{\pi}8\\
\frac{\sin(3\theta)}{8\cos^3(\theta)}\ \ {\rm if}\ \frac{\pi}{8}\leq\theta<\frac{\pi}{4}
\end{cases},$$
while $B(\theta)=4$.

While sufficient for many practical purposes (see \cite{Po93}) these bounds for the H\"older constants are not asymptotically optimal as $\theta\to 0^+$ and as a result they cannot be used to get realistic \L ojasiewicz-Siciak exponents for the associated Koch curves.

It is well known (see \cite{Po93}) that the Hausdorff dimension of $\Gamma_{\theta}$ is $\frac{\log 2}{\log (2\cos\theta)}=\frac{1}{\gamma}$.

By substituting the edges of a regular $n$-gon,  centered at $0$ and with side lengths equal to $1$, with appropriately rotated and translated $\Gamma_{\theta}$'s   (so that the endpoints coincide with the vertices of the edge) we end up with a Jordan curve which we denote by   $\Pi_{\theta}$. If $n$ is big enough, the neighboring copies of $\Gamma_{\theta}$ will meet up ''at an angle'' of angular measure close enough to $\pi$, so that the  Ahlfors three point constant of $\Pi_{\theta}$ will be the same as for $\Gamma_{\theta}$. 
 Also the Hausdorff dimension of $\Pi_{\theta}$ is the same as of $\Gamma_{\theta}$.
 \subsection{Notions from potential theory}
  
 Denote by $V_{K}^{*}$ the Siciak-Zahariuta extremal function  (or Green function) with pole at infinity associated to the compact set $K\subset\mathbb C$  (or more generally $K\subset\CC$).
 
  \begin{remark}\label{conformalgreen}If $K$ is the closure of a Jordan domain in $\mathbb C$, then it is known that   $V_K^*=\log |(f^*)^{-1}|$, where $f^{*}$ is the conformal mapping from the complement  unit disc to the complement of the Jordan domain, fixing the point at infinity.
  	\end{remark}
 \begin{definition}
 	A regular (in the sense of pluripotential theory) compact set $K$ is said to satisfy the { \L ojasiewicz-Siciak condition} of order $\alpha$  ($K\in LS\left (\alpha\right)$) if 
 	$V_K=V_K^{*}$
 	satisfies the inequality
 	$$V_K\left (z\right)\geq Cdist\left (z,K\right)^{\alpha},\ {\rm if}\ dist\left (z,K\right)\leq 1$$
 	for some positive constant $C$ independent of the point $z$. The distance is with respect to the usual Euclidean metric.
 \end{definition} 
 
 \begin{remark}
 	\label{pierzchala} If the set $K\subset \mathbb C$ has simple geometry - it is connected, does not disconnect the plane and it is locally a smooth curve except for a finite set of points at which branching may occur, and if the finite number of curves at the branching points meet at angles $\theta_{1},\cdots,\theta_{k}$, then $K\in L S(\alpha)$ with $\alpha=\max\{1, (\min\{\frac{\theta_{1}}{\pi},\cdots,\frac{\theta_{k}}{\pi}\})^{-1}\}$ (see \cite{Pi14}).
 \end{remark}
 \begin{corollary} To use Theorem \ref{1dim}, all the angles at which curves meet have to be obtuse. In particular there should be no point at which more than three curves meet.\end{corollary}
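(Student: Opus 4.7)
The plan is to unpack the statement via Theorem \ref{1dim} and Remark \ref{pierzchala}, and then perform a simple angle-counting argument around a branching point.

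First I would observe that Theorem \ref{1dim} requires the \L ojasiewicz-Siciak exponent to satisfy $\alpha<2$. By Remark \ref{pierzchala}, for a set of the described geometry one has
\begin{equation*}
\alpha=\max\left\{1,\left(\min\Bigl\{\tfrac{\theta_{1}}{\pi},\ldots,\tfrac{\theta_{k}}{\pi}\Bigr\}\right)^{-1}\right\}.
\end{equation*}
The condition $\alpha<2$ therefore forces $(\min_{j}\theta_{j}/\pi)^{-1}<2$, i.e.\ $\min_{j}\theta_{j}>\pi/2$. Since this must hold at every branching point and for every pair of incident smooth arcs, all angles at which curves meet must be strictly obtuse. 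This settles the first assertion.

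For the second assertion, I would argue by pigeonhole at a fixed branching point. Suppose $n\geq 4$ smooth arcs meet at some point $p$. These $n$ arcs divide a small disc around $p$ into $n$ consecutive sectors, and the sum of the corresponding angles equals $2\pi$. Hence the minimum of these $n$ angles is at most $2\pi/n\leq\pi/2$, contradicting the obtuseness requirement established in the previous paragraph. Therefore no more than three arcs may meet at any branching point.

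There is essentially no obstacle in the argument: the content is entirely concentrated in Remark \ref{pierzchala}, whose proof (deferred to \cite{Pi14}) is the genuine technical input. The only point that requires a slight care is the convention that the ``angles'' $\theta_{j}$ in Remark \ref{pierzchala} are precisely the angular measures of the sectors cut out at a branching point, so that the arithmetic identity $\sum_{j}\theta_{j}=2\pi$ is what licenses the pigeonhole step.
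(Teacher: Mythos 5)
Your proposal is correct and follows exactly the reasoning the paper intends: the corollary is stated as an immediate consequence of Remark \ref{pierzchala}, namely that $\alpha<2$ forces every angle $\theta_j$ to exceed $\pi/2$, and the pigeonhole count of sectors around a branching point (four or more arcs give a sector of angle at most $\pi/2$) is precisely why at most three curves may meet. No genuinely different route is taken, and your explicit angle-counting just spells out what the paper leaves implicit.
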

\subsection{Viscosity notions}

\begin{definition} A function $q$ defined on some neighborhood $V$ of a point $z$ is called a differential test from above at $z$ for the upper-semicontinuous function $\varphi$ defined on a domain $\Omega\subset \CC$,  also containing $z$, if  it is $\mathcal C^{2}$ smooth on $V$, and $$\varphi(z)-q(z)=\sup_{w\in V\cap\Omega}(\varphi(w)-q(w)).$$
\end{definition}
Note that if $q\in\mathcal C^{2}(V)$, $q\geq \varphi$ on $V\cap\Omega$ and $\{w\in\Omega\cap V| q(w)=\varphi(w)\}\ni z$ then $q$ is a  differential test from above for  $\varphi$ at $z$.

\begin{definition}
	An upper-semicontinuous function $\varphi$ on a domain $\Omega$ is said to allow a differential test from above at $z\in\Omega$ if there exists $V\ni z$ such that the set of differential tests from above for $\varphi$ at $z$ is non-empty.  
\end{definition}

Clearly $\varphi$ allows a differential test from above at any point at which it is twice differentiable. 
 
One should consult the notes \cite{Zer13} for a good introduction to the viscosity theory from the complex analysis perspective.

\section{Results}

\subsection{Jordan curves in $\mathbb C$}
The following easy proposition yields the right constant in Lesley's theorem in the case of a bi-H\"older parametrization:
\begin{proposition}\label{przejscie}
	Suppose that a curve $\Gamma$ is parametrized by a parametrization $\varphi$ satisfying
	$$A|t-s|^{\gamma}\leq|\varphi(t)-\varphi(s)|\leq B|t-s|^{\gamma},$$
	for some $\gamma\leq 1$.
	Then $\Gamma$ satisfies the Ahlfors condition with the constant $c=\frac{B2^{1-\gamma}}{A}$.
\end{proposition}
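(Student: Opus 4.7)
The plan is to reduce the geometric Ahlfors condition directly to a one-variable inequality for $x \mapsto x^\gamma$. Given $z_1, z_3 \in \Gamma$ with $|z_1 - z_3|$ sufficiently small and a point $z_2$ lying on the sub-arc of smaller diameter, I pull everything back through $\varphi$: write $z_i = \varphi(t_i)$. For $|z_1 - z_3|$ small, the sub-arc of smaller diameter corresponds to the interval $[t_1, t_3]$ in parameter space, so $t_2$ lies between $t_1$ and $t_3$. Setting $a = |t_1 - t_2|$ and $b = |t_2 - t_3|$, one has $a + b = |t_1 - t_3|$.

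The upper bi-H\"older bound yields $|z_1 - z_2| + |z_2 - z_3| \leq B(a^\gamma + b^\gamma)$. The key elementary fact is
$$a^\gamma + b^\gamma \leq 2^{1-\gamma}(a+b)^\gamma, \qquad a, b \geq 0,\; 0 < \gamma \leq 1,$$
which is a one-line consequence of the concavity of $x \mapsto x^\gamma$: maximizing $f(x) = x^\gamma + (s - x)^\gamma$ on $[0, s]$ one finds the unique critical point $x = s/2$, at which $f(s/2) = 2^{1-\gamma} s^\gamma$, so this is the maximum.

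Combining the above with the lower bi-H\"older bound $|z_1 - z_3| \geq A(a + b)^\gamma$ gives
$$|z_1 - z_2| + |z_2 - z_3| \leq B \cdot 2^{1-\gamma}(a + b)^\gamma \leq \frac{B \cdot 2^{1-\gamma}}{A}\, |z_1 - z_3|,$$
which is exactly the Ahlfors three point condition with $c = B \cdot 2^{1-\gamma}/A$. There is no deep obstacle here; the only bookkeeping needed is to match the ``arc of smaller diameter'' in the geometric definition with the subinterval $[t_1, t_3]$ in parameter space. For a Jordan arc this is automatic by monotonicity of the parametrization. For a Jordan closed curve one chooses the threshold $\delta$ small enough that the H\"older continuity of $\varphi^{-1}$ (implied by the lower bi-H\"older bound) forces the short parameter arc to realize the smaller-diameter sub-arc of $\Gamma$, after which the argument proceeds verbatim.
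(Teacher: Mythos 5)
Your proof is correct and follows essentially the same route as the paper: pull back to parameter space, apply the upper bi-H\"older bound, invoke the concavity inequality $a^\gamma+b^\gamma\leq 2^{1-\gamma}(a+b)^\gamma$, and finish with the lower bi-H\"older bound. The extra remark about matching the smaller-diameter sub-arc with the short parameter interval is a sensible bookkeeping point that the paper leaves implicit.
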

\begin{proof}
	Fix any three points $z_1=\varphi(t_1),\ z_2=\varphi(t_2),\ z_3=\varphi(t_3)$ on $\Gamma$ with $z_2$ in between $z_1$ and $z_3$ (i.e.
	$t_1<t_2<t_3$).
	Then 
	\begin{align*}
	&|z_1-z_2|+|z_2-z_3|=|\varphi(t_1)-\varphi(t_2)|+|\varphi(t_2)-\varphi(t_3)|\\
	&\leq B(|t_1-t_2|^{\gamma}+|t_2-t_3|^{\gamma})\leq 2B\left(\frac{|t_1-t_2|+|t_2-t_3|}{2}\right)^{\gamma}=2^{1-\gamma}B|t_1-t_3|^{\gamma}\\
	&\leq \frac{2^{1-\gamma}B}{A}|\varphi(t_1)-\varphi(t_3)|=\frac{2^{1-\gamma}B}{A}|z_1-z_3|.
	\end{align*}
	
\end{proof}
 \begin{theorem}\label{procedure1} Any Jordan curve that can be parametrized by a  parametrization $\varphi$ such that
 		$$A|t-s|^{\gamma}\leq|\varphi(t)-\varphi(s)|\leq B|t-s|^{\gamma},$$ for some $\gamma\leq 1$  is a minimum set of a strictly subharmonic function defined on  $\mathbb C\setminus\{a\}$ for some $a$ in the bounded component of the complement of the curve, provided that:
 		
 		\begin{equation}
 		\label{procedure}\frac
 		 {2^{1-\gamma}B}{A}<\frac{1}{\sin\left(\frac{1}{8} \left(\sqrt{17}-1\right) \pi\right)}\approx 1.06237.\end{equation}
 	\end{theorem}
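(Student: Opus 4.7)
The plan is to execute the five-step procedure advertised in the introduction. The bi-Hölder parametrization is precisely the input needed to launch the chain, and the numerical threshold in (\ref{procedure}) will emerge at the end as the algebraic condition that the resulting Łojasiewicz–Siciak exponent falls below $2$.

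First I would invoke Proposition \ref{przejscie} to convert the parametrization into the Ahlfors three point condition with constant $c=2^{1-\gamma}B/A$. Under (\ref{procedure}) this $c$ lies strictly below $1/\sin(\pi(\sqrt{17}-1)/8)$, so in particular $\Gamma$ is a quasicircle. Next I would apply Lesley's Theorem \ref{thm1} to both conformal maps: the exterior map $f^{*}$ from $\{|\zeta|>1\}$ onto the unbounded component of $\mathbb{C}\setminus\Gamma$, and the interior map $f$ from the unit disc onto the bounded component, each extending Hölder continuously to the closed domain with exponent
$$\alpha_1 \;=\; \frac{2\arcsin^{2}(c^{-1})}{\pi^{2}-\pi\arcsin(c^{-1})}.$$

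The central step is translating this boundary regularity into a lower bound on the Siciak–Zahariuta extremal function. By Remark \ref{conformalgreen}, in the unbounded component $V_\Gamma^{*}(z)=\log|(f^{*})^{-1}(z)|$. For $z$ close to $\Gamma$ set $\zeta=(f^{*})^{-1}(z)$; then $\zeta/|\zeta|$ lies on the unit circle, so $f^{*}(\zeta/|\zeta|)\in\Gamma$, giving
$$\mathrm{dist}(z,\Gamma)\;\le\;\bigl|f^{*}(\zeta)-f^{*}(\zeta/|\zeta|)\bigr|\;\le\;C\,(|\zeta|-1)^{\alpha_1}.$$
Hence $|\zeta|-1\gtrsim \mathrm{dist}(z,\Gamma)^{1/\alpha_1}$, and since $\log t\ge (t-1)/t$ for $t\ge 1$ we get $V_\Gamma(z)\gtrsim \mathrm{dist}(z,\Gamma)^{1/\alpha_1}$. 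The same argument applied to the interior map $f$ delivers the analogous decay of the Green function of the bounded component with pole at some interior $a$. Thus $\Gamma$ satisfies the Łojasiewicz–Siciak condition with exponent $1/\alpha_1$ on each side.

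The last (and purely algebraic) step is to check that (\ref{procedure}) is exactly equivalent to $1/\alpha_1<2$. Writing $x=\arcsin(c^{-1})$, the inequality $1/\alpha_1<2$ becomes $4x^{2}+\pi x-\pi^{2}>0$, whose relevant root is $x=\pi(\sqrt{17}-1)/8$; monotonicity of $\arcsin$ then gives $c<1/\sin(\pi(\sqrt{17}-1)/8)$, which is hypothesis (\ref{procedure}). Since $\Gamma$ is a quasicircle it is regular in the pluripotential sense, compact, connected, and has empty interior, so Theorem \ref{1dim} applies in its disconnecting case and produces a strictly subharmonic function on $\mathbb{C}\setminus\{a\}$ whose minimum set is exactly $\Gamma$.

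The main obstacle I anticipate is not any single step but the book-keeping in the third step, namely verifying that the Hölder continuity of $f^{*}$ up to $\partial\{|\zeta|>1\}$ really yields the claimed distance-to-$\Gamma$ estimate \emph{independent of the choice of nearest boundary point}, and that the two-sided LS-type estimate matches the form of Theorem \ref{1dim} needed in the Jordan-curve (disconnecting) case. The algebraic threshold in the fourth step, while slightly ugly, is mechanical once the exponent $1/\alpha_1$ has been pinned down.
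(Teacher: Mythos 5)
Your proposal follows essentially the same route as the paper's proof: Proposition \ref{przejscie} gives the Ahlfors constant $c=\frac{2^{1-\gamma}B}{A}$, Lesley's Theorem \ref{thm1} gives the boundary H\"older exponent of $f^{*}$, this is converted into the \L ojasiewicz--Siciak condition with exponent $\alpha<2$ exactly when (\ref{procedure}) holds, and Theorem \ref{1dim} in its disconnecting case concludes, with the interior side handled via the second conformal map (the paper uses a conformal inversion sending $a$ to infinity, which amounts to the same thing). The only real difference is that you prove the implication ``$f^{*}\in Lip(1/\alpha)$ up to the boundary implies $LS(\alpha)$'' directly by your (correct) distance estimate, whereas the paper cites Proposition 14 of \cite{DD16}, and you carry out explicitly the threshold algebra $4x^{2}+\pi x-\pi^{2}>0$ that the paper merely asserts; both are fine.
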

 	\begin{proof} Theorem \ref{1dim} ensures that it is enough to prove that if $K$ is the closure of the bounded component of the complement of the curve, then ${K}\in LS(\alpha)$ for some $\alpha<2$. This will be enough to obtain that the curve is one-sided minimum, in the unbounded component. A conformal transformation sending $a$ to the point at infinity shows that the same  will hold inside the bounded component. One can use a quasiconformal reflection with respect to the curve as well, as in \cite{DD16}.
 		
 		Now $K\in LS(\alpha)$,  by Remark \ref{conformalgreen}, is the same as $\log |(f^{*})^{-1}(z)|\geq C|dist(z,K)|^{\alpha} $. By proposition $14$ in \cite{DD16}, this reduces to $f^{*}\in Lip(\frac{1}{\alpha})$ up to the boundary. By Lesley's theorem  we have that $\alpha=\frac {\pi^2-\pi \arcsin(c^{-1})}{2\arcsin^2(c^{-1})}$. The function $f(c)= \frac {\pi^2-\pi \arcsin(c^{-1})}{2\arcsin^2(c^{-1})}$ is increasing for $c\geq 1$, and the solution to the equation $f(c)=2$ is $c=\frac{1}{\sin\left(\frac{1}{8} \left(\sqrt{17}-1\right) \pi\right)}$. Finally Proposition \ref{przejscie} gives that $c=\frac{2^{1-\gamma}B}{A}$. Hence we get $\frac{\pi^2-\pi \arcsin(\frac{A}{2^{1-\gamma}B} )}{2\arcsin^2(\frac{A}{2^{1-\gamma}B})}<2$ if $A,B$ and $\gamma$ satisfy (\ref{procedure}).
 		\end{proof}
 		\begin{remark}
 			There are many instances where a bi-H\"older parametrization is known to exist. This is the case when $K$ is the invariant set of an iterated function system of contracting similarities (see \cite{IW15}) or the boundary of some self-affine tiles (see \cite{AL11}). In particular, besides  Koch curves which are explicitly dealt with below, many other known fractals such as the boundary of the Rauzy fractal, the Gosper island, various dragon curves, enjoy this property. On the other hand every quasiconformal curve is a bi-Lipshitz image (thus not destroying the bi-H\"olderness of the parametrization) of a member of a family of curves which are essentially slight generalizations of the  Koch curve (see \cite{Ro01}). Of course a necessary condition is that the Hausdorff dimension is constant in any piece of the curve.
 		\end{remark}
 		
Below we prove that the parametrization studied by Ponomarev admits slightly better constants $A(\theta)$ and $B(\theta)$. It turns out that these are good enough for our purposes as long as $\theta$ is sufficiently small. Our proof is inspired by an argument form \cite{BJPP97}:

\begin{theorem}\label{kochparametrization}
	The natural parametrization $\varphi$ of the curve $\Gamma_{\theta}$ satisfies the two-sided  condition:
	\begin{equation}\label{1}
	A(\theta)|t_{1}-t_{2}|^{\gamma}\leq |\varphi(t_{1})-\varphi(t_{2})|\leq B(\theta)|t_{1}-t_{2}|^{\gamma},
	\end{equation}
	where $\gamma=\gamma(\theta)=\frac{\log (2\cos\theta)}{\log2}$ and \begin{equation}A(\theta)=\begin{cases} \max\{\cos2\theta\cos\theta-\frac{8\cos^{2}\theta\sin\theta}{2\cos\theta-1},\frac{1}{4\cos^2(\theta)}\}\ \ {\rm if}\ 0<\theta<\frac{\pi}8\\ \frac{\sin(3\theta)}{8\cos^3(\theta)}\ \ {\rm if}\ \frac{\pi}{8}\leq\theta<\frac{\pi}{4}\end{cases}\end{equation}
	\begin{equation}B(\theta)=\frac{1}{\cos\theta}+\frac{8\cos^{2}\theta\sin\theta}{2\cos\theta-1}.\end{equation}
	In particular $\lim_{\theta\to0^{+}}A(\theta)=\lim_{\theta\to0^{+}}B(\theta)=\lim_{\theta\to0^{+}}\gamma(\theta)=1$.
\end{theorem}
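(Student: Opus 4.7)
The natural parametrization $\varphi:[0,1]\to \Gamma_\theta$ is defined via binary expansions: the dyadic subinterval $[k/2^n,(k+1)/2^n]$ is mapped onto the triangle $\Delta_{k+1}^n$ in an order-preserving way, and $\varphi(t)$ is the unique point in the nested sequence of triangles determined by the binary digits of $t$. The construction in the statement of the theorem makes $\varphi$ self-similar: the restriction of $\varphi$ to any dyadic interval of length $2^{-n}$ is, up to a rigid motion and a dilation by the factor $(2\cos\theta)^{-n}$, the full parametrization $\varphi$ itself (with orientation possibly reversed, depending on the parity of the binary position). The Hölder exponent $\gamma=\log(2\cos\theta)/\log 2$ is forced by matching the length scales $2^{-n}$ in the parameter and $(2\cos\theta)^{-n}$ in the image.

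The plan for both bounds is the same reduction. Given $t_1<t_2$, let $n\geq 0$ be the largest integer with $t_1,t_2$ lying in a common dyadic interval of length $2^{-n}$; then they lie in adjacent sub-intervals of length $2^{-n-1}$, and $2^{-n-1}\leq t_2-t_1\leq 2^{-n}$. By self-similarity it suffices to treat the base case $n=0$, i.e. $t_1\in[0,1/2]$, $t_2\in[1/2,1]$, so that $\varphi(t_1)\in\Delta_1^1$ (left lateral sub-triangle) and $\varphi(t_2)\in\Delta_2^1$ (right lateral sub-triangle), the two meeting at the apex $P=\varphi(1/2)=\tfrac12+\tfrac{i}{2}\tan\theta$.

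For the \emph{upper bound}, I would bound $|\varphi(t_1)-\varphi(t_2)|$ above using the triangle inequality through $P$. The distance from $P$ to any point of $\Delta_1^1$ splits into a ``direct'' contribution along the lateral side (bounded by its length $1/(2\cos\theta)$) plus a sum of ``perpendicular bumps'' arising from the deviation of $\varphi$ from this lateral side as one descends into sub-triangles of generations $2,3,\dots$. Each bump at generation $j$ is the apex of an isosceles triangle of base $(2\cos\theta)^{-j}$, so has perpendicular displacement at most $\tfrac12(2\cos\theta)^{-j}\tan\theta$. Summing the geometric series $\sum_{j\geq 1}(2\cos\theta)^{-j}\tan\theta$ (with a factor $4\cos^2\theta$ absorbing the rescaling back from generation $1$ to generation $0$ length units) yields exactly the correction $8\cos^2\theta\sin\theta/(2\cos\theta-1)$. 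Adding the base contribution $1/\cos\theta=2\cdot 1/(2\cos\theta)$ gives $B(\theta)$.

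For the \emph{lower bound}, the same decomposition is used in reverse. For $\theta\in[\pi/8,\pi/4)$ the two sub-triangles $\Delta_1^1,\Delta_2^1$ separate enough that a direct geometric argument — the minimum distance between a point of $\Delta_1^1$ and a point of $\Delta_2^1$, controlled by the law of cosines at the common vertex $P$ with opening angle $\pi-2\theta$ — already gives $A(\theta)=\sin(3\theta)/(8\cos^3\theta)$, recovering Ponomarev's constant. The delicate case is $\theta\in(0,\pi/8)$: here the opening angle is close to $\pi$, the lateral sides are almost collinear, and a naive separation estimate degenerates. One must instead estimate $|\varphi(t_1)-\varphi(t_2)|$ as the projected distance $\cos 2\theta\cdot\cos\theta\cdot|t_1-t_2|^\gamma$ onto the line through $\varphi(0)$ and $\varphi(1)$, and then \emph{subtract} the maximum possible lateral oscillation, which is precisely the same geometric series as in the upper bound, giving $\cos 2\theta\cos\theta-8\cos^2\theta\sin\theta/(2\cos\theta-1)$. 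The max with $1/(4\cos^2\theta)$ covers the intermediate regime where the projection argument is weaker than the elementary bound coming from placing $t_1,t_2$ at the endpoints of the dyadic sub-intervals.

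The main obstacle is the small-$\theta$ lower bound: a sharp subtraction of the geometric-series correction from the principal term is required, and one must verify that the remainder remains strictly positive throughout $(0,\pi/8)$. Once the estimates are in place, the limits $\theta\to 0^+$ follow from $\sin\theta\to 0$, $\cos 2\theta\cos\theta\to 1$, $\gamma(\theta)\to 1$, which yield $A(\theta),B(\theta)\to 1$ and constitute the key improvement over the constants of \cite{Po93}.
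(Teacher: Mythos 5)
Your reduction step contains a genuine gap. You choose $n$ as the largest integer for which $t_1,t_2$ share a dyadic interval of length $2^{-n}$ and then assert $2^{-n-1}\leq t_2-t_1\leq 2^{-n}$; the lower bound is false (take $t_1=\tfrac12-\epsilon$, $t_2=\tfrac12+\epsilon$: the largest common dyadic interval is $[0,1]$, yet $t_2-t_1=2\epsilon$ is arbitrarily small). Consequently the ``base case'' $t_1\in[0,\tfrac12]$, $t_2\in[\tfrac12,1]$ is not a base case at all: inside it $|t_1-t_2|$ still ranges over $(0,1]$, so an upper estimate of the form ``lateral side length plus a geometric series of bumps'' only yields a constant, not $B(\theta)|t_1-t_2|^{\gamma}$, and the whole two-scale difficulty reappears untouched. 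The lower bound suffers from the same defect and more: the minimum distance between $\Delta_1^1$ and $\Delta_2^1$ is zero, since they share the apex $P$, so no separation constant can come from the law of cosines alone; and the claim that the projection onto the line through $\varphi(0)$ and $\varphi(1)$ is at least $\cos2\theta\cos\theta\,|t_1-t_2|^{\gamma}$ is unjustified --- the directions of the generation-$k$ segments accumulate angles up to $k\theta$ with the base, so this projection is not controlled in the naive way and the claim cannot serve as the principal term without proof.

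For comparison, the paper avoids both problems by working with the piecewise-linear approximants $\varphi^j$ and \emph{two} critical generations: the last generation $m$ at which $\varphi^j(x),\varphi^j(y)$ lie on the same segment and the last generation $n$ at which they lie on neighboring segments (sharing a vertex $z$); this choice of $n$ does satisfy $2^{-n-1}\leq|x-y|\leq 2^{-n+1}$. On common or neighboring segments one has the exact scaling $|\varphi^j(x)-z|+|\varphi^j(y)-z|=|x-y|/\cos^j\theta$, the law of cosines at $z$ with opening angle $\pi-2\theta$ or $\pi-4\theta$ gives $|\varphi^n(x)-\varphi^n(y)|\geq\cos2\theta\,|x-y|/\cos^n\theta$, and the deviation of $\varphi$ from $\varphi^n$ is controlled by the geometric series $\sum_{j\geq n}\lambda^j\sin\theta$ with $\lambda=\frac{1}{2\cos\theta}$; adding, respectively subtracting, these tails and converting $\cos^{-n}\theta$, $\lambda^n$ into powers of $|x-y|$ via $n\approx-\log|x-y|/\log2$ produces exactly $B(\theta)$ and the new branch of $A(\theta)$ (the values $\frac{1}{4\cos^2\theta}$ and $\frac{\sin 3\theta}{8\cos^3\theta}$ are quoted from Ponomarev, not rederived). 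To repair your argument you would need to replace the single dyadic-ancestor reduction by such a two-scale (or chaining) decomposition and to justify the lower bound through the shared-vertex geometry rather than through sub-triangle separation or projection onto the base line.
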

\begin{remark}
	  For small $\theta$, the value of $A(\theta)$ is $\cos2\theta\cos\theta-\frac{8\cos^{2}\theta\sin\theta}{2\cos\theta-1}$ which is a decreasing function of $\theta$. For $\theta >\theta_{0}\approx0.091$ the dominating term is $\frac{1}{4\cos^2(\theta)}$. Actually $\cos2\theta\cos\theta-\frac{8\cos^{2}\theta\sin\theta}{2\cos\theta-1}<0$, for $\theta>\theta_{1}\approx0.121$.
	\end{remark}
\begin{proof}
 
	The values for $A(\theta)$, except $\cos2\theta\cos\theta-\frac{8\cos^{2}\theta\sin\theta}{2\cos\theta-1}$ are by Ponomarev, \cite{Po07}.
	
	Let $\Gamma_{\theta}^{n}:=\bigcup_{k=1}^{2^{n}}c_{k}^{n}$ be the piecewise linear curve approximating $\Gamma_{\theta}$ as $n\to\infty$. Observe that the natural parametrization $\varphi^{n}$ (the piecewise linear map from $[0,1]$ to $\Gamma_{\theta}^{n}$ sending $\frac{k}{2^{n}}, k=0,\cdots,2^{n}$ to the consecutive nodes of $\Gamma_{\theta}^{n}$) can also be described in the following manner:
	
	Fix $x\in(0,1)$ and let $\varphi^{1}(x)$ be the second intersection point of the line through $x$, perpendicular to $c^{0}_{1}$, with $\Delta^{0}_{1}$. If $\varphi^{1}(x)$ is on $c_{1}^{1}$, draw a line perpendicular to $c_{1}^{1}$ through $\varphi^{1}(x)$ and let $\varphi^{2}(x)$ be its second intersection point with $\Delta_{1}^{1}$; if $\varphi^{1}$ is on $c^{1}_{2}$ proceed accordingly. If $\varphi^{1}(x)$ is the common vertex of $\Delta_{1}^{1}$ and $\Delta_{2}^{1}$ then leave $\varphi^{2}(x)= \varphi^{1}(x)$. Finally define $\varphi^{j}(x)$ from $\varphi^{j-1}(x)$ inductively.

			\begin{figure}[h]
				\centering
				\begin{subfigure}[b]{0.5\textwidth}
					\includegraphics[width=\textwidth]{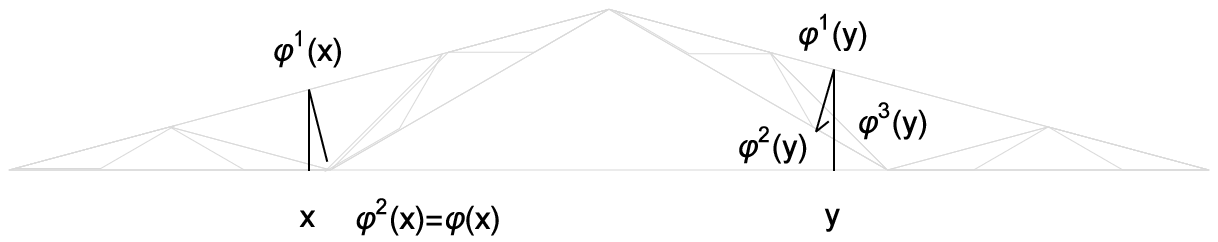}
				\end{subfigure}
			\end{figure}
	 For $\varphi(x)$ we take $\lim_{n\to\infty}\varphi^{n}(x)$ (the limit exists for any $x\in[0,1]$, since $\varphi^{n}(x)$ is a Cauchy sequence for any fixed $x$, as it will become clear from the argument below).
	 
	 Fix now $x,y\in[0,1]$.  Just as in \cite{Po93}, fix $m$ ($m$ may be zero) such that $\varphi^{j}(x)$ and $\varphi^{j}(y)$ belong to the same segment $c_{s_{j}}^{j}, j=0,\cdots,m$ but $\varphi^{m+1}(x)$ and $\varphi^{m+1}(y)$ are already on different segments $c_{s_{m+1}}^{m+1}$ and $c_{s_{m+1}+1}^{m+1}$ respectively. Let also $n\geq m+1$ be the least number such that $\varphi^{j}(x)$ and $\varphi^{j}(y), j=m+1,\cdots,n$ are on neighboring segments $c_{s_{j}}^{j}$ and $c_{s_{j}+1}^{j}$, while  $\varphi^{n+1}(x)$ and $\varphi^{n+1}(y)$ belong to segments $c_{k}^{n+1}$ and $c_{l}^{n+1}$ that do not share a common endpoint. We note trivially that $2\leq |k-l|\leq 3$ in this case and 
	\begin{equation}\label{eq0}\frac{1}{2^{n+1}}\leq |x-y|\leq \frac{1}{2^{n-1}}.\end{equation}
	Note that for any $k$
	\begin{equation}\label{eq00}|\varphi^{k+1}(t)-\varphi^{k}(t)|\leq length(c_{s}^{k+1})\sin\theta=\lambda^{k}\sin\theta,\end{equation}
   with $\lambda=\frac{1}{2\cos\theta}$. Thus	
   
   \begin{equation}\label{eq1}
   |\varphi(x)-\varphi^{n}(x)|=\left|\sum_{j=n}^{\infty}\varphi^{j+1}(x)-\varphi^{j}(x)\right|\leq  \sum_{j=n}^{\infty}\lambda^{j}\sin\theta=\lambda^{n}\frac{\sin\theta}{1-\lambda}.
   \end{equation}
   Observe also that  for $j=1,\cdots,m$, 
   
   $$|\varphi^{j}(x)-\varphi^{j}(y)|=\frac{|\varphi^{j-1}(x)-\varphi^{j-1}(y)|}{\cos\theta}=\cdots=\frac{|x-y|}{\cos^{j}\theta}.$$
   For $j=m+1,\cdots,n$ let $z$ be the common endpoint of the neighboring segments $c_{s_{j}}^{j}$ and $c_{s_{j}+1}^{j}$ containing $\varphi^{j}(x)$ and $\varphi^{j}(y)$. Then
   \begin{equation}\label{eq2}
   |\varphi^{j}(x)-\varphi^{j}(y)|\leq |\varphi^{j}(x)-z|+|\varphi^{j}(y)-z|\leq \frac{|\varphi^{j-1}(x)-z|}{\cos\theta}+\frac{|\varphi^{j-1}(y)-z|}{\cos\theta}=\cdots 
   \end{equation}
   $$= \frac{|\varphi^{m}(x)-\varphi^{m}(\varphi^{-1}(z))|}{\cos^{j-m}\theta}+\frac{|\varphi^{m}(y)-\varphi^{m}(\varphi^{-1}(z))|}{\cos^{j-m}\theta}=\frac{|\varphi^{m}(x)-\varphi^{m}(y)|}{\cos^{j-m}\theta}=\frac{|x-y|}{\cos^{j}\theta}.$$
   
   Note that $\varphi^{m}(x),\varphi^{m}(\varphi^{-1}(z))$ and $\varphi^{m}(y)$ belong to the same segment $c_{s_{m}}^{m}$ !

  Coupling (\ref{eq1}) and (\ref{eq2}) we obtain
  
  \begin{equation}
  \label{eq3}
  |\varphi(x)-\varphi(y)|\leq |\varphi^{n}(x)-\varphi^{n}(y)|+2\lambda^{n}\frac{\sin\theta}{1-\lambda}\leq \frac{|x-y|}{\cos^{n}\theta}+2\lambda^{n}\frac{\sin\theta}{1-\lambda}.
  \end{equation}

Note that by (\ref{eq0})

\begin{equation}
\label{eqq4}
-\frac{\log |x-y|}{\log 2}-1\leq n \leq -\frac{\log |x-y|}{\log 2}+1.
\end{equation}

We have $\frac{1}{\cos\theta}>1$ and $\lambda<1$ hence $$\frac{1}{\cos^{n}\theta}\leq \frac{1}{(\cos\theta)^{-\frac{\log |x-y|}{\log 2}+1}}=\frac{|x-y|^{\frac{\log(\cos\theta)}{\log 2}}}{\cos\theta}$$ and 
$$\lambda ^{n}\leq \frac{1}{(2\cos\theta)^{-\frac{\log |x-y|}{\log 2}-1}}=2\cos\theta|x-y|^{\frac{\log(2\cos\theta)}{\log 2}}. $$
Hence we obtain the bound from above  in (\ref{eq3})

$$\frac{|x-y|}{\cos^{n}\theta}+2\lambda^{n}\frac{\sin\theta}{1-\lambda}\leq\frac{1}{\cos\theta}\left|x-y\right|^{\frac{\log (2\cos\theta)}{\log 2}}+\frac{4\cos\theta\sin\theta}{1-\frac{1}{2\cos\theta}}\left|x-y\right|^{\frac{\log (2\cos\theta)}{\log 2}}=B(\theta)|x-y|^{\gamma}$$
and thus the right hand inequality of (\ref{1}) is established.

To get the bound from below consider the angle between the segments 
$[\varphi^{j}(x),z]$ and $[z,\varphi^{j}(y)]$, defined as above, for $j=m+1,\cdots,n$. Two cases may occur:

a) either the angle is $\pi-2\theta$

b) or the angle is $\pi -4\theta$

In the first case,
$$|\varphi^{j}(x)-\varphi^{j}(y)|^2=|\varphi^{j}(x)-z|^2+|\varphi^{j}(y)-z|^2+2 |\varphi^{j}(x)-z||\varphi^{j}(y)-z|\cos2\theta$$  
$$=\left(|\varphi^{j}(x)-z|+|\varphi^{j}(y)-z|\right)^2-2(1-\cos2\theta)|\varphi^{j}(x)-z||\varphi^{j}(y)-z|$$$$\geq\left(|\varphi^{j}(x)-z|+|\varphi^{j}(y)-z|\right)^2-4\sin^{2}\theta\frac{\left(|\varphi^{j}(x)-z|+|\varphi^{j}(y)-z|\right)^2}{4}$$
$$= \left(|\varphi^{j}(x)-z|+|\varphi^{j}(y)-z|\right)^2\cos^{2}\theta=\frac{|x-y|^2\cos^{2}\theta}{\cos^{2j}\theta},$$

so 
\begin{equation}
\label{eq4}
|\varphi^{j}(x)-\varphi^{j}(y)|\geq \frac{|x-y|\cos\theta}{\cos^{j}\theta}.
\end{equation}

In the second case analogous reasoning leads to

$$|\varphi^{j}(x)-\varphi^{j}(y)|\geq \frac{|x-y|\cos2\theta}{\cos^{j}\theta}.$$

Thus in both cases we obtain 

$$|\varphi^{j}(x)-\varphi^{j}(y)|\geq \frac{|x-y|\cos2\theta}{\cos^{j}\theta}.$$

Now by using the triangle inequality and (\ref{eq00}) 

$$|\varphi(x)-\varphi(y)|\geq|\varphi^{n}(x)-\varphi^{n}(y)|-\sum_{j=n}^{\infty}|\varphi^{j+1}(x)-\varphi^{j}(x)|-\sum_{j=n}^{\infty}|\varphi^{j+1}(y)-\varphi^{j}(y)|$$
$$\geq\frac{\cos2\theta|x-y|}{\cos^{n}\theta}-\sum_{j=n}^{\infty}2\lambda^{j}\sin{\theta}=\frac{\cos2\theta|x-y|}{\cos^{n}\theta}-\lambda^{n}\frac{2\sin\theta}{1-\lambda}.$$
Again by (\ref{eqq4}) the latter is bounded below by 

$$\cos2\theta\cos\theta|x-y|^{\frac{\log (2\cos\theta)}{\log 2}}-2\cos\theta|x-y|^{\frac{\log (2\cos\theta)}{\log 2}}\frac{2\sin\theta}{1-\frac{1}{2\cos\theta}}$$
$$=|x-y|^{\frac{\log (2\cos\theta)}{\log 2}}\left(\cos2\theta\cos\theta-\frac{8\cos^{2}\theta\sin\theta}{2\cos\theta-1}\right)=A(\theta)|x-y|^{\gamma}.$$

This establishes the left hand side of (\ref{1}).
	\end{proof}

\begin{corollary}\label{theta}
	The Jordan curve formed of Koch curves, $\Pi_{\theta}$, is a minimum set of a strictly subharmonic function for any $\theta\in (0,\tilde{\theta})$ with $\tilde{\theta}\approx 0.00378$. 
	

\end{corollary}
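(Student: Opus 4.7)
The plan is to apply Theorem \ref{procedure1} once we show that $\Pi_{\theta}$ satisfies the Ahlfors three point condition with constant strictly less than the critical threshold $c^{*}:=1/\sin\left(\tfrac{1}{8}(\sqrt{17}-1)\pi\right)\approx 1.06237$ appearing in (\ref{procedure}). First I would compute the Ahlfors three point constant of a single Koch curve $\Gamma_{\theta}$ by combining Proposition \ref{przejscie} with the refined bi-H\"older constants from Theorem \ref{kochparametrization}; this gives
$$c(\theta)=\frac{2^{1-\gamma(\theta)}B(\theta)}{A(\theta)}.$$
The final statement of Theorem \ref{kochparametrization} yields $c(\theta)\to 1$ as $\theta\to 0^{+}$, so $c(\theta)<c^{*}$ for all sufficiently small $\theta$. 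Solving $c(\theta)=c^{*}$ numerically, using $A(\theta)=\cos 2\theta\cos\theta-\tfrac{8\cos^{2}\theta\sin\theta}{2\cos\theta-1}$ (the branch of $A$ that is dominant for very small $\theta$, by the remark after Theorem \ref{kochparametrization}), produces precisely $\tilde{\theta}\approx 0.00378$.

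Next I would pass from $\Gamma_{\theta}$ to $\Pi_{\theta}$, the Jordan curve obtained by replacing each of the $n$ edges of a regular $n$-gon with a rotated/translated isometric copy of $\Gamma_{\theta}$. The Ahlfors condition for $\Pi_{\theta}$ must be checked in three configurations of the test points $z_{1},z_{2},z_{3}$: all on the same Koch copy (already handled with constant $c(\theta)$); on two neighboring copies sharing a polygon vertex; or on three distinct copies, possibly straddling one or two corners. In the latter configurations the relevant obtuse angles are the interior angles $\pi(1-2/n)$ of the regular $n$-gon, which tend to $\pi$ as $n\to\infty$. An elementary geometric comparison shows that the inflation ratio $(|z_{1}-z_{2}|+|z_{2}-z_{3}|)/|z_{1}-z_{3}|$ caused by such a nearly-flat corner exceeds $c(\theta)$ by an error tending to zero as $n\to\infty$, uniformly in the scale $\delta$. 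Consequently, for $\theta<\tilde{\theta}$ and $n$ chosen large enough (the smallness of $c^{*}-c(\theta)$ dictating how large), the Ahlfors constant of $\Pi_{\theta}$ remains strictly below $c^{*}$.

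With that estimate in place, the proof of Theorem \ref{procedure1} applies verbatim to $\Pi_{\theta}$: Lesley's Theorem \ref{thm1} produces an exterior conformal map in $Lip(1/\alpha)$ with $\alpha<2$, Remark \ref{conformalgreen} together with Proposition $14$ of \cite{DD16} translates this into the \L ojasiewicz-Siciak condition $LS(\alpha)$ for the closure $K$ of the bounded component of $\mathbb C\setminus\Pi_{\theta}$, and Theorem \ref{1dim} then delivers the desired strictly subharmonic function on $\mathbb C\setminus\{a\}$ whose minimum set is exactly $\Pi_{\theta}$. The main obstacle in this outline is the quantitative passage from $\Gamma_{\theta}$ to $\Pi_{\theta}$: the contribution of the $n$-gon corners must be controlled uniformly at \emph{all} scales $\delta$ entering the Ahlfors definition, not merely at scales below the side length, which forces the elementary but somewhat tedious case analysis of the position of the test triple relative to the polygon vertices. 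The remaining steps amount to quoting previously established results.
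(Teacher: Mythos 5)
Your proposal is correct and follows essentially the paper's own route: the corollary is proved by plugging the constants of Theorem \ref{kochparametrization} (with the small-$\theta$ branch of $A(\theta)$) into condition (\ref{procedure}) of Theorem \ref{procedure1} and solving $\frac{2^{1-\gamma}B}{A}=\frac{1}{\sin\left(\frac18(\sqrt{17}-1)\pi\right)}$ numerically, which gives $\tilde\theta\approx 0.00378$; the paper additionally notes that this quotient is increasing in $\theta$, which is the observation that secures the inequality on all of $(0,\tilde\theta)$ rather than only for small $\theta$. Your extra analysis of the polygon corners when passing from $\Gamma_\theta$ to $\Pi_\theta$ is not part of the paper's proof of the corollary --- the paper simply asserts, when defining $\Pi_\theta$ in the preliminaries, that for $n$ large the Ahlfors constant of $\Pi_\theta$ coincides with that of $\Gamma_\theta$ --- so your sketch addresses the same point at a comparable level of rigor without changing the argument.
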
 
\begin{proof} By Theorem \ref{procedure1}, we need to show that $$\frac{2^{1-\gamma}B}{A} =\frac{2^{1-\frac{\log (2\cos\theta)}{\log2}}\left(\frac{1}{\cos\theta}+\frac{8\cos^{2}\theta\sin\theta}{2\cos\theta-1}\right)}{\cos2\theta\cos\theta-\frac{8\cos^{2}\theta\sin\theta}{2\cos\theta-1}}<1.06237.$$
	The expression above, treated as a function of $\theta$ is increasing. Numerically, equality holds for $\theta=0.00378$.
\end{proof}
\begin{proposition}
	If $\Gamma$ is a quasicircle, then it can be the minimum set of a strictly subharmonic function only if its Hausdorff dimension is less than $\frac{10}{9}$. In particular $\Pi_{\theta}$ can not be a minimum set of a strictly subharmonic function for any $\theta>\arccos(2^{-\frac{1}{10}})\approx 0.368044$.
\end{proposition}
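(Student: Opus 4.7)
The plan is to invoke the second part of Theorem \ref{1dim}: I will produce a point $w\in\Gamma$ at which $V_K(z)=O(|z-w|^{\alpha})$ for some $\alpha>2$, ruling out that $K$ (the closure of the bounded component of $\mathbb C\setminus\Gamma$) is a minimum set.

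First I would note that every quasicircle $\Gamma$ admits a bi-H\"older parametrization $\varphi\colon[0,1]\to\Gamma$ with exponent $\gamma=1/\dim_{H}\Gamma$ and constants $0<A\le B$: for $\Pi_{\theta}$ this is precisely Theorem \ref{kochparametrization}, and in general one extends a quasiconformal homeomorphism sending the unit circle onto $\Gamma$ and uses the standard Hausdorff-dimension--H\"older duality (a $\gamma$-H\"older parametrization of $\Gamma$ together with its inverse being $1/\gamma$-H\"older forces $\gamma=1/\dim_{H}\Gamma$, as in the discussion after Theorem \ref{kochparametrization}). The hypothesis $\dim_{H}\Gamma\ge 10/9$ thus forces $\gamma\le 9/10$, and Proposition \ref{przejscie} yields
$$c\ \ge\ 2^{1-\gamma}B/A\ \ge\ 2^{1-\gamma}\ \ge\ 2^{1/10}\ \approx\ 1.0718,$$
which strictly exceeds the Lesley threshold $1/\sin(\pi(\sqrt{17}-1)/8)\approx 1.0624$ from Theorem \ref{procedure1}.

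Next, set $L(c):=\frac{2\arcsin^{2}(c^{-1})}{\pi^{2}-\pi\arcsin(c^{-1})}$ for Lesley's H\"older exponent of $f^{*}$. The computation in the proof of Theorem \ref{procedure1} gives $L(c)<1/2$ as soon as $c>1.0624$, so $1/L(c)>2$. For a quasicircle, this exponent is expected to be sharp at the boundary point $w\in\Gamma$ where the Ahlfors three-point constant is (nearly) realised, i.e.\ one also has the matching anti-H\"older estimate $|f^{*}(\zeta)-f^{*}(\zeta_{0})|\asymp|\zeta-\zeta_{0}|^{L(c)}$ with $\zeta_{0}=(f^{*})^{-1}(w)$. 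Inverting, $(f^{*})^{-1}$ is pointwise H\"older at $w$ with exponent $1/L(c)>2$, and since $|(f^{*})^{-1}(w)|=1$,
$$V_{K}(z)\ =\ \log|(f^{*})^{-1}(z)|\ \le\ C\bigl|(f^{*})^{-1}(z)-(f^{*})^{-1}(w)\bigr|\ \le\ C'|z-w|^{1/L(c)}$$
near $w$, yielding the required $V_{K}(z)=O(|z-w|^{\alpha})$ with $\alpha=1/L(c)>2$. The second part of Theorem \ref{1dim} then concludes that $K$ cannot be a minimum set, and the bounded versus unbounded component issue is handled by the same conformal transformation trick used in the proof of Theorem \ref{procedure1}.

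For the numerical claim about $\Pi_{\theta}$: solving $\dim_{H}\Pi_{\theta}=\log 2/\log(2\cos\theta)=10/9$ gives $\cos\theta=2^{-1/10}$, i.e.\ $\theta=\arccos(2^{-1/10})\approx 0.368044$; since $\dim_{H}\Pi_{\theta}$ is monotone increasing in $\theta$, this gives the stated threshold. The main obstacle is precisely the saturation step: Lesley's theorem by itself only delivers a one-sided H\"older bound, and the matching pointwise anti-H\"older behaviour at an extremal triple is the technical heart of the argument. Verifying it requires going back to the bi-H\"older data $(\gamma,A,B)$ in a neighbourhood of an extremal Ahlfors triple and showing that $f^{*}$ actually achieves exponent $L(c)$ there, rather than merely being majorised by the corresponding H\"older modulus.
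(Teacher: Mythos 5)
Your argument has genuine gaps at both ends. First, the opening step is false as stated: a general quasicircle need \emph{not} admit a bi-H\"older parametrization with a single exponent $\gamma=1/\dim_H\Gamma$ (the paper's remark after Theorem \ref{procedure1} already points out that constant Hausdorff dimension on every subarc is a necessary condition for such a parametrization, and generic quasicircles violate this, e.g.\ a curve that is smooth on one arc and fractal on another). Second, you use Proposition \ref{przejscie} in the wrong direction: it asserts that $\Gamma$ \emph{satisfies} the Ahlfors condition with constant $2^{1-\gamma}B/A$, i.e.\ it gives an upper bound for an admissible constant, not a lower bound for the optimal one, so the chain $c\geq 2^{1-\gamma}B/A\geq 2^{1/10}>1.0624$ is unjustified. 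Third, and most seriously, even if the optimal Ahlfors constant did exceed the threshold of Theorem \ref{procedure1}, that only says the \emph{sufficient} criterion is inconclusive; to invoke the second half of Theorem \ref{1dim} you must actually exhibit $V_K(z)=O(|z-w|^{\alpha})$ with $\alpha>2$ at some $w$, and this rests entirely on your ``saturation'' claim that $f^*$ attains Lesley's exponent $L(c)$ (a matching anti-H\"older lower bound) at an extremal triple. Lesley's theorem is a one-sided estimate whose exponent is in general not attained -- it is derived from the Ahlfors constant, which does not determine the local boundary behaviour of the conformal map -- and you concede this step is unproven. Since that is precisely the content needed to rule out the minimum-set property, the core of the proof is missing, and the appearance of $10/9$ in your computation is an artifact of the invalid identification $\gamma=1/\dim_H\Gamma$ rather than a derived bound.

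For comparison, the paper argues along a different and shorter route: a quasicircle is the image of the circle under a $K$-quasiconformal map of the plane, whence $f,f^*\in Lip(1/K)$ and therefore the \L ojasiewicz--Siciak exponent is controlled by $K$ (Proposition 14 of \cite{DD16}), so being a minimum set forces $K<2$; then Smirnov's theorem resolving Astala's conjecture gives $\dim_H\Gamma\leq 1+\left(\frac{K-1}{K+1}\right)^{2}<\frac{10}{9}$, and the threshold $\theta=\arccos(2^{-1/10})$ follows from the dimension formula for $\Gamma_\theta$. The constant $10/9$ thus comes from Smirnov's sharp dimension bound for $K$-quasicircles, an ingredient your proposal never uses and for which the Lesley/Ahlfors machinery is not a substitute. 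If you want to salvage your approach you would need either a proof of the pointwise anti-H\"older estimate for $f^*$ at some boundary point (which is a hard, and in general false, strengthening of Lesley), or you should switch to the quasiconformal-extension plus Smirnov argument.
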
	
	\begin{proof}
	If $\Gamma$ is a quasicircle, then it is well known that $f^*$ and $f$ can be extended to a $K$-quasiconformal mappings of the whole plane, for some $K$ (we can take the bigger quasiconformal constant of both extensions if they differ). It is also well known that this yields that $f^{*},f\in Lip\left(\frac{1}{K}\right)$. On the other hand $f^{*},f\in Lip\left(\frac{1}{K}\right)$ yields $\Gamma\in LS(K)$ (see e.g. \cite{DD16}). Thus $K=2$ is a threshold value if we want $\Gamma$ to be the minimum set of a strictly subharmonic function.
	
	On the other hand Smirnov (See \cite{Smi10}), proving a conjecture of Astala (See \cite{Ast94}) obtained the upper bound 	$1+\left(\frac{K-1}{K+1}\right)^{2}$ for the Hausdorff dimension of a quasicircle obtained by a $K$-quasiconformal mapping. Since we want $K<2$, it turns out that the Hausdorff dimension of $\Pi_{\theta}$ can not exceed $\frac{10}{9}$ if $\Pi_{\theta}$ is the minimum set of a strictly subharmonic function. By the formula for the Hausdorff dimension of $\Gamma_{\theta}$, this corresponds to $\theta=\arccos(2^{-\frac{1}{10}})$.
	\end{proof}
	\begin{remark}
		Geometric intuition suggest that the threshold $\theta$ should be $\frac{\pi}{8}$ (because the approximating curves of $\Gamma_\theta$ have only obtuse angles if $\theta<\frac{\pi}{8}$). However $\arccos(2^{-\frac{1}{10}})<\frac{\pi}{8}\approx 0.392699$. In particular this shows that the \L ojasiewicz-Siciak exponent and the property of being a minimum set of a strictly (pluri)subharmonic function are not stable under passing to the limit in the Hausdorff metric of compact sets,  even if strong topological assumptions are imposed on the limit set.
	\end{remark}
	\begin{remark}
		We still, however, can not rule out the possibility that other compact sets in the plane, of higher Hausdorff dimension, can still be the minimum sets of strictly subharmonic functions. They however can not be quasicircles, and our methods do not apply to them. 
	\end{remark}
	
	\subsection{Viscosity theory of  (pluri)subharmonic functions}
Now we will prove that for small $\theta$, the curve $\Pi_{\theta}$ contains no points at which differential test from above exists for a certain subharmonic function.

  Let $$\tilde V_{\Pi_{\theta}}(z)=\begin{cases}
V^*_{\Pi_{\theta}}(z), \text{ if } z \text { lies in the unbounded component of the complement of } \Pi_{\theta} \\
0, \text{ if } z\in \Pi_{\theta}\\
V^*_{\iota(\Pi_{\theta})}(\frac{1}{z}), \text{ if } z \text { lies in the bounded component of the complement of } \Pi_{\theta} \\
\end{cases},$$
where $\iota(z)=\frac{1}{z}$ is the standard inversion. It is clear that $\tilde V_{\Pi_{\theta}}$ is subharmonic on $\mathbb C\setminus\{0\}$.
\begin{theorem}
	\label{difftest} For any $\theta$, such that $\Pi_{\theta}$  is a minimum set of a strictly subharmonic function, the set $\Pi_{\theta}$ contains only points at which the function  $\tilde V_{\Pi_{\theta}}$ allows no differential test from above. In particular such a set can have  Hausdorff dimension greater than $1$. 
\end{theorem}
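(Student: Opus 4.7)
I would argue by contradiction: suppose some $z_0 \in \Pi_\theta$ admits an upper differential test $q$ for $\tilde V_{\Pi_\theta}$ on a neighborhood $V$. Then $q \in \mathcal C^2(V)$, and after subtracting the constant $q(z_0)-\tilde V_{\Pi_\theta}(z_0)$ we may assume $q \geq \tilde V_{\Pi_\theta}$ on $V$ with $q(z_0) = \tilde V_{\Pi_\theta}(z_0) = 0$. The first key observation is that $\tilde V_{\Pi_\theta}$ is nonnegative on its whole domain of definition: the Siciak-Zahariuta function $V^*_{\Pi_\theta}$ is nonnegative by definition, and the interior branch $V^*_{\iota(\Pi_\theta)}(1/z)$ is the exterior Green function of the inverted configuration evaluated at a point of its unbounded complement. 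Hence $q \geq 0$ on $V$ with $q(z_0)=0$, so $z_0$ is a local minimum of $q$, which forces $\nabla q(z_0) = 0$. Taylor's formula then yields a constant $M>0$ with
$$\tilde V_{\Pi_\theta}(z) \leq q(z) \leq M|z - z_0|^2$$
for all $z$ close enough to $z_0$.

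Next I would feed in the hypothesis. Since $\Pi_\theta$ is a minimum set of a strictly subharmonic function on $\mathbb C \setminus \{a\}$, the argument in the proof of Theorem \ref{procedure1} (via Theorem \ref{1dim} together with the conformal transformation/quasiconformal reflection trick it refers to) forces both the closure of the bounded component of $\mathbb C \setminus \Pi_\theta$ and the closure of the bounded component of $\mathbb C \setminus \iota(\Pi_\theta)$ to belong to $LS(\alpha)$ for some $\alpha < 2$. Using Remark \ref{conformalgreen} together with the fact that $w=1/z$ is a bi-Lipschitz diffeomorphism on any neighborhood of $\Pi_\theta$ bounded away from $0$, both branches of $\tilde V_{\Pi_\theta}$ then satisfy
$$\tilde V_{\Pi_\theta}(z) \geq c\, \mathrm{dist}(z, \Pi_\theta)^{\alpha}$$
for $z$ near $\Pi_\theta$, $z \notin \Pi_\theta$. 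Combined with the upper bound above, this gives
$$\mathrm{dist}(z, \Pi_\theta) \leq C|z - z_0|^{2/\alpha}, \qquad 2/\alpha > 1.$$

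To close the contradiction I would exploit the smallness of $\Pi_\theta$. The Koch curves satisfy the open set condition, so the upper box dimension of $\Pi_\theta$ coincides with its Hausdorff dimension $d = \log 2/\log(2\cos\theta) < 2$; in particular $\Pi_\theta$ has zero planar Lebesgue measure. Hence, up to a null set, the entire ball $B(z_0,r)$ is contained in the $(Cr^{2/\alpha})$-tubular neighborhood of $\Pi_\theta \cap B(z_0, 2r)$. A standard box-counting estimate bounds the area of such a neighborhood by $C' r^d (r^{2/\alpha})^{2 - d}$; comparing with $|B(z_0, r)| = \pi r^2$ gives
$$\pi r^2 \leq C'\, r^{\,d + (2/\alpha)(2-d)} = C'\, r^{2 + (2-d)(2/\alpha - 1)}.$$
Since $d < 2$ and $\alpha < 2$, the exponent $(2-d)(2/\alpha - 1)$ is strictly positive, making this inequality impossible as $r \to 0^+$.

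This contradiction shows no $z_0 \in \Pi_\theta$ admits an upper differential test for $\tilde V_{\Pi_\theta}$. The "in particular" claim follows because $d = \log 2/\log(2\cos\theta) > 1$ for every $\theta > 0$: combined with Corollary \ref{theta}, this produces, for every $\theta \in (0, \tilde\theta)$, an upper non-contact set containing $\Pi_\theta$ and therefore of Hausdorff dimension strictly greater than $1$. The step I expect to be most delicate is the passage to the interior lower bound: one must trace the $LS$ condition through the inversion carefully to confirm that $V^*_{\iota(\Pi_\theta)}(1/z)$ really dominates a positive multiple of $\mathrm{dist}(z, \Pi_\theta)^{\alpha}$ with an exponent $\alpha<2$, and verify that the Minkowski-content/box-counting estimate for $\Pi_\theta$ localizes as claimed with exponent $2-d$.
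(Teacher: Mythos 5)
Your proposal is correct in substance, but the engine of your contradiction is genuinely different from the paper's. The setup is the same: normalize $q$ so that $q(z_0)=0$, note $q\geq\tilde V_{\Pi_\theta}\geq 0$, deduce $\nabla q(z_0)=0$ and the quadratic bound $\tilde V_{\Pi_\theta}(z)\leq M|z-z_0|^2$, and invoke the two-sided \L ojasiewicz--Siciak estimate $\tilde V_{\Pi_\theta}\geq c\,\mathrm{dist}(\cdot,\Pi_\theta)^{\alpha}$ with $\alpha<2$ (obtained on both sides of the curve via Remark \ref{conformalgreen} and the inversion, exactly as the paper does). From there the paper argues potential-theoretically: the estimate from \cite{DD16} gives $\Delta\tilde V_{\Pi_\theta}^{2/\alpha}\geq c_1>0$ off the curve, and Jensen's formula forces the circular means of $\tilde V_{\Pi_\theta}^{2/\alpha}$ around $z_0$ to grow like $r^2$, while the test caps them at $c_2r^{4/\alpha}$, a contradiction since $4/\alpha>2$. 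You instead argue metrically: the two bounds squeeze all of $B(z_0,r)$ into a $Cr^{2/\alpha}$-neighborhood of $\Pi_\theta\cap B(z_0,2r)$, and the area count $\lesssim r^{d}(r^{2/\alpha})^{2-d}$ beats $\pi r^2$ because $d<2$ and $2/\alpha>1$. This works, with two caveats. First, the covering estimate you need is the \emph{local, uniform} one, $N(\Pi_\theta\cap B(z_0,2r),\epsilon)\lesssim (r/\epsilon)^{d}$, i.e.\ Ahlfors $d$-regularity of $\Pi_\theta$ (true for these self-similar curves); mere equality of box and Hausdorff dimensions, which is what the open set condition gives you directly, is not enough, and the global Minkowski bound $\lambda^{2}(\{\mathrm{dist}(\cdot,\Pi_\theta)<\epsilon\})\lesssim\epsilon^{2-d'}$ would fail here, since it only yields the exponent $(2/\alpha)(2-d')$, which need not exceed $2$ -- you correctly flagged this as the delicate step. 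Second, your route is less general: the paper's proof uses nothing about the set beyond $LS(\alpha)$ with $\alpha<2$ and measure zero (hence the remark that it applies to arbitrary such compacta), whereas yours needs the dimensional regularity of the Koch construction. Finally, like the paper, you read the hypothesis ``minimum set'' as supplying $LS(\alpha)$ for some $\alpha<2$; strictly speaking Theorem \ref{1dim} gives the implication in the opposite direction, but in the range provided by Corollary \ref{theta} this is exactly what holds, so you are no worse off than the original argument on this point.
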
 

\begin{proof} Fix $z_{0}\in \Pi_{\theta}$ and suppose the contrary, that is, there exists a $q$ which is a differential test from above. Suppose that the neighborhood $V=B(z_{0},r)$ is a sufficiently small disc. First, since $\tilde V_{\Pi_{\theta}}(z_0)-q(z)=\sup_{w\in B(z_{0},r)}(\tilde V_{\Pi_{\theta}}(w)-q(w))$, we can assume that $q(z_{0})=0$ by subtracting a constant. Now since $q\geq V_{\Pi_{\theta}}\geq 0$, $q$ has a local minimum at $z_{0}$ and since $q\in\mathcal C^{2}$, $\nabla q(z_{0})=0$. It follows by the Taylor expansion that $q(z)\leq c|z-z_{0}|^2+o(|z-z_{0}|^{2})$ for some constant $c\geq 0$, that is, $ \tilde V_{\Pi_{\theta}}(z)\leq c|z-z_{0}|^2+o(|z-z_{0}|^{2})$ in $B(z_{0},r)$.
	
	We recall an estimate from \cite{DD16}:
	If $K$ is compact regular set such that $\hat{\mathbb C}\setminus K$ is simply connected  and $K\in LS(\alpha)$, for $\alpha<2$ then $\Delta V_{K}^{\frac{2}{\alpha}}(z)\geq \frac{2}{\alpha}\left(\frac{2}{\alpha}-1\right) \frac{V_{K}^{\frac{2}{\alpha}}}{dist(z, K)} \geq c_{1}>0,$ at every point where the Laplacian is well defined. This is satisfied both for $V^*_{\Pi_{\theta}}$ and $V^*_{\iota(\Pi_{\theta})}$ and hence for $\tilde V_{\Pi_{\theta}}$. The Laplacian of $(\tilde V_{\Pi_{\theta}})^{\frac{2}{\alpha}}$ is singular only on $\Pi_{\theta}$ which has planar Lebesgue measure $0$. Hence the measure integral $\int_{B(z_0,r)}\Delta \tilde V_{\Pi_{\theta}}^{\frac{2}{\alpha}}$ is equal to the Lebesgue integral $ \int_{B(z_0,r)\setminus \Pi_\theta}\Delta \tilde V_{\Pi_{\theta}}^{\frac{2}{\alpha}}d\lambda^2$.
	
	The Jensen formula gives us
	
	$$
	\frac1{2\pi}\int_0^{2\pi}\tilde V_{\Pi_{\theta}}^{\frac{2}{\alpha}}\left (z_{0}+re^{i\theta}\right)d\theta=\frac1{2\pi}\int_0^{2\pi}\tilde V_{\Pi_{\theta}}^{\frac{2}{\alpha}}\left (z_{0}+re^{i\theta}\right)d\theta-\tilde V_{\Pi_{\theta}}^{\frac{2}{\alpha}}\left (0\right)$$$$=
	\frac1{2\pi}\int_0^rs^{-1}\int_{B\left (z_{0},s\right)}\Delta \tilde V_{\Pi_{\theta}}^{\frac{2}{\alpha}}\left (z\right)dzds\\
	\geq \frac1{2\pi}\int_0^rs^{-1}\pi c_{1} s^2ds=c_{1}r^2/4.
	$$
	On the other hand
	$$\frac1{2\pi}\int_0^{2\pi}\tilde V_{\Pi_{\theta}}^{\frac{2}{\alpha}}\left (z_{0}+re^{i\theta}\right)d\theta\leq \frac1{2\pi}\int_0^{2\pi} q(z_{0}+re^{i\theta})^{\frac{2}{\alpha}}d\theta\leq\frac1{2\pi}\int_0^{2\pi}(cr^2+o(r^2))^{\frac{2}{\alpha}}d\theta$$$$= c_{2} r^{\frac{4}{\alpha}}+o (r^{\frac{4}{\alpha}}).$$
			Hence
		$$c_{1}r^2/4\leq c_{2} r^{\frac{4}{\alpha}}+o (r^{\frac{4}{\alpha}}). $$
		Now since $\alpha<2$ this is a contradiction for small enough $r$.
	\end{proof}
	\begin{remark}
		We did not use any properties of $\Pi_\theta$, besides $\Pi_\theta\in LS(\alpha)$. Hence our theorem applies to any compact sets $K\in LS(\alpha)$, for some $\alpha<2$, which are either Jordan curves or  connected and locally connected sets with empty interior that do not disconnect the plane. There are a lot of examples with Hausdorff dimension greater than $1$. 
	\end{remark}
 \begin{theorem}\label{sevrealdimensions} The set $E\subset\Omega\subset\CC$ of points, at which no differential test from above exists for a given $\varphi\in PSH(\Omega)$, can have Hausdorff dimension greater than $2n-1$.
 	
 \end{theorem}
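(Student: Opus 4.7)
The strategy is to reduce to the planar case already handled in Theorem \ref{difftest} via a standard product construction. That theorem supplies, for suitable small $\theta$, a compact set $\Pi_\theta\subset\mathbb{C}$ with $\dim_H(\Pi_\theta)>1$ together with a subharmonic function $\tilde V_{\Pi_\theta}$ on $\mathbb{C}\setminus\{0\}$ admitting no $C^2$ differential test from above at any point of $\Pi_\theta$. I would use this pair as the seed and thicken it by $2(n-1)$ real dimensions of ``trivial'' directions.

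Concretely, on $\Omega := (\mathbb{C}\setminus\{0\})\times\mathbb{C}^{n-1}$ consider
$$\varphi(z_1,z_2,\ldots,z_n) := \tilde V_{\Pi_\theta}(z_1) + |z_2|^2 + \cdots + |z_n|^2.$$
The first summand is plurisubharmonic on $\Omega$ because a subharmonic function of a single complex variable, pulled back through the projection $(z_1,\ldots,z_n)\mapsto z_1$, is plurisubharmonic; adding the smooth (strictly) plurisubharmonic term $|z_2|^2+\cdots+|z_n|^2$ keeps $\varphi$ in $PSH(\Omega)$.

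The key step is to show that every point $(z_1^0,w^0)\in\Pi_\theta\times\mathbb{C}^{n-1}$ lies in the upper non-contact set of $\varphi$. Suppose a candidate $q\in C^2(U\times W)$ satisfied $q\geq\varphi$ on a neighborhood $U\times W$ of $(z_1^0,w^0)$ with equality at $(z_1^0,w^0)$. Restricting to the slice $w=w^0$ and setting
$$\tilde q(z_1) := q(z_1,w^0) - |w^0|^2,$$
I obtain a $C^2$ function on the planar neighborhood $U$ of $z_1^0$ satisfying $\tilde q\geq \tilde V_{\Pi_\theta}$ on $U$ and $\tilde q(z_1^0)=\tilde V_{\Pi_\theta}(z_1^0)$. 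This is precisely a differential test from above for $\tilde V_{\Pi_\theta}$ at $z_1^0\in\Pi_\theta$, contradicting Theorem \ref{difftest}.

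It then remains to bound $\dim_H E$ from below. Fixing any nonempty open ball $B\subset\mathbb{C}^{n-1}$, the non-contact set contains $\Pi_\theta\times B$, and Marstrand's product inequality (applicable because $B$ has positive Lebesgue measure in $\mathbb{R}^{2(n-1)}$) gives
$$\dim_H E \;\geq\; \dim_H(\Pi_\theta\times B) \;=\; \dim_H(\Pi_\theta) + 2(n-1) \;>\; 1 + 2(n-1) \;=\; 2n-1.$$
I do not anticipate a serious obstacle: the construction is essentially a reduction, and the only points needing mild care are the plurisubharmonicity of the pull-back and the slicing argument, both of which are routine. The role of the quadratic term $|z_2|^2+\cdots+|z_n|^2$ is merely cosmetic (it could be omitted), but having it present makes the subtraction in the definition of $\tilde q$ transparent and leaves no ambiguity about strict inequality on the surrounding slice.
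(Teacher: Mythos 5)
Your proof is correct and follows essentially the same route as the paper: build a product-type plurisubharmonic function from $\tilde V_{\Pi_\theta}$, restrict a putative upper test to the complex line through the distinguished variable to contradict Theorem \ref{difftest}, and conclude via the Hausdorff dimension of the product $\Pi_\theta\times(\text{open set in }\mathbb C^{n-1})$. The only (immaterial) difference is your choice of auxiliary summand $|z_2|^2+\cdots+|z_n|^2$ in place of the paper's $\max\{\log|z_1\cdots z_{n-1}|,0\}$, which vanishes on the polydisc slice.
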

 
 \begin{proof} Consider the function $\varphi(z_{1},\cdots,z_{n}):=\max\{\log|z_{1}\cdots z_{n-1}|,0\}+\tilde V_{\Pi_{\theta}}(z_{n})$. Clearly $\varphi\in PSH(\mathbb C^{n-1}\times(\mathbb C\setminus\{0\})$. Fix a point in $w=(w_{1},\cdots,w_{n})\in\mathbb D^{n-1}\times \Pi_{\theta}$, where $\mathbb D^{n-1}$ is the unit polydisc of dimension $n-1$, and suppose that $\varphi$ allows a differential test  from above $q$ at $w$. Then $\varphi$ restricted to the line $l:\mathbb C\setminus\{0\}\ni\lambda\to (w_{1},\cdots,w_{n-1},\lambda)$ is subharmonic and $\varphi\circ l= \tilde V_{\Pi_{\theta}}$. Moreover, the  $q\circ l$ is a differential test from above for $\varphi\circ l$ at $w_{n}$. The assertion now follows from Theorem \ref{difftest} and the fact that the Hausdorff dimension of $\mathbb D^{n-1}\times\Pi_{\theta}$ is $2(n-1)+\frac{\log 2}{\log (2\cos \theta)}$.
 	\end{proof}
	
	
	
	The next theorem aims to provide an answer to a problem posed in \cite{DGZ16}:
	\begin{theorem}\label{trudinger} Let $\Omega\subset \CC$ be a domain and let $\varphi\in PSH(\Omega)$. Then almost everywhere in $\Omega$ (with respect to  the Lebesgue measure) $\varphi$ allows a differential test from above.
		\end{theorem}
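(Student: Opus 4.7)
The plan is to use the sup-convolution regularization of $\varphi$ and exploit Aleksandrov's theorem on almost-everywhere twice-differentiability of semi-convex functions. First I would reduce to the case where $\varphi$ is bounded on $\Omega$: replacing $\varphi$ by $\varphi_K:=\max(\varphi,-K)$ yields a bounded PSH function that agrees with $\varphi$ on the open set $\{\varphi>-K\}$, and a differential test from above for $\varphi_K$ at a point of this set serves automatically for $\varphi$. As $K\to\infty$ these sets exhaust $\Omega$ modulo the polar (hence Lebesgue-null) set $\{\varphi=-\infty\}$, so we may assume $|\varphi|\leq K$ on $\Omega$.

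Next, for each $M>0$ I would consider the sup-convolution
\[
\varphi^M(z):=\sup_{y\in\overline\Omega}\bigl[\varphi(y)-M|z-y|^2\bigr].
\]
The identity $\varphi^M(z)+M|z|^2=\sup_y[\varphi(y)+2Mz\cdot y-M|y|^2]$ displays $\varphi^M+M|\cdot|^2$ as a supremum of affine functions of $z$ and hence as a convex function; so $\varphi^M$ is semi-convex, and by Aleksandrov's theorem it is twice classically differentiable on a set $\Omega_M^*\subset\Omega$ of full Lebesgue measure. At every $z_0\in\Omega_M^*$ the supremum is attained at some $y_M(z_0)\in\overline\Omega$ (upper semi-continuity plus compactness), and the envelope theorem produces the first-order condition $\nabla\varphi^M(z_0)=2M(y_M(z_0)-z_0)$. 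Writing $y=y_M(z_0)+h$ in the defining inequality yields the quadratic upper bound
\[
\varphi(y_M(z_0)+h)\leq\varphi(y_M(z_0))+\nabla\varphi^M(z_0)\cdot h+M|h|^2,
\]
valid for all admissible $h$. Hence the paraboloid $q(z):=\varphi(y_M(z_0))+\nabla\varphi^M(z_0)\cdot(z-y_M(z_0))+M|z-y_M(z_0)|^2$ is a $C^2$ differential test from above for $\varphi$ at the point $y_M(z_0)$.

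This yields the inclusion $\bigcup_{M>0}y_M(\Omega_M^*)\subseteq\{z_0\in\Omega:\varphi\text{ admits a differential test from above at }z_0\}$, and the remaining, technically most delicate step, which I expect to be the main obstacle, is to verify that this union has full Lebesgue measure in $\Omega$. The three ingredients I would combine are: (i) the uniform estimate $|y_M(z_0)-z_0|\leq\sqrt{2K/M}$, showing that $T_M:=y_M$ tends to the identity uniformly as $M\to\infty$; (ii) the differential $DT_M=I+D^2\varphi^M/(2M)$ has non-negative determinant by semi-convexity of $\varphi^M$; and (iii) the monotone convergence $\varphi^M\searrow\varphi$, which forces $\int_\Omega(\varphi^M-\varphi)\,d\lambda\to 0$ by the monotone convergence theorem. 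A change of variables together with a Vitali-type covering argument should then show $|\Omega\setminus T_M(\Omega_M^*)|\to 0$ as $M\to\infty$. An alternative and perhaps cleaner route, avoiding the covering step, is to exploit the local Riesz decomposition $\varphi=h+G*\mu$ with $h$ harmonic and $\mu=\Delta\varphi$ a positive Radon measure, and to invoke the classical Calder\'on-Zygmund fact that Newton potentials of positive Radon measures admit a second-order Peano expansion at Lebesgue-almost every point.
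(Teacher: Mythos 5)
Your reduction to bounded $\varphi$ and the ``magic property'' of the sup-convolution are fine (the set $\{\varphi>-K\}$ need not be open, as $\varphi$ is only upper semicontinuous, but this is harmless: $q\geq\max(\varphi,-K)\geq\varphi$ with equality at the touching point), and up to that point you follow the same strategy as the paper, which also regularizes by sup-convolution and produces paraboloid tests at upper contact points. The genuine gap is exactly at the step you yourself flag as delicate: showing $\lambda^{2n}\bigl(\Omega\setminus T_M(\Omega_M^*)\bigr)\to 0$. None of your three ingredients can deliver this. The area formula applied to $T_M=\mathrm{id}+\nabla\varphi^M/(2M)$ gives an inequality in the wrong direction, namely $\lambda^{2n}(T_M(E))\leq\int_E\det DT_M\,d\lambda^{2n}$; to bound the measure of the image from below you would need either surjectivity onto most of $\Omega$ or a positive lower bound on $\det\bigl(I+D^2\varphi^M/(2M)\bigr)$, and neither is available: this Jacobian vanishes precisely where the semiconvexity bound $D^2\varphi^M\geq-2MI$ is saturated, which is exactly where the obstruction concentrates, and monotone convergence $\varphi^M\searrow\varphi$ gives no quantitative control on it. Worse, a point $y$ lies in the image of the sup-attainment map if and only if $\varphi$ is touched from above at $y$ by a paraboloid of opening $M$ (complete the square in the defining inequality), so the assertion that this image has asymptotically full measure is essentially a restatement of the theorem; as sketched, the argument is circular at its crucial point.

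What is missing is the quantitative ingredient that Trudinger's argument (reproduced in the paper) supplies: one perturbs $\varphi_\varepsilon$ by $-k(\Vert w-z\Vert^2-R^2)$, applies the Alexandrov (Alexandrov--Bakelman--Pucci) maximum principle on $B(z,R)$ to the upper contact set $E_{k,\varepsilon}$, and controls the real Hessian determinant on that set by the complex Monge--Amp\`ere mass via B\l ocki's inequality $\det\bigl(\partial^2u/\partial z_i\partial\bar z_j\bigr)\geq 2^{-n}\sqrt{\det D^2u}$ together with the superadditivity of the complex Monge--Amp\`ere operator; this yields a uniform positive lower bound for the Lebesgue density of the contact set $E$ at every point of $\Omega$, and the Lebesgue density theorem concludes. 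Your sketch contains no substitute for this ABP step. The alternative route you mention does not repair it: the ``classical Calder\'on--Zygmund fact'' that Newtonian potentials of arbitrary positive Radon measures admit a pointwise second-order Peano expansion at Lebesgue-almost every point is not a standard theorem (Calder\'on--Zygmund theory gives a.e.\ differentiability in the $L^p$-mean sense for such data, not pointwise expansions; only the trace of the Hessian of a subharmonic function is a measure), and it is in fact stronger than the statement you are trying to prove, so invoking it begs the question.
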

	\begin{proof}	
		The proof belongs to Trudinger, \cite{Tru89}. We present it here in the complex setting for the reader's convenience.

		First, we reduce the problem to the case when $\varphi$ is bounded.
		
		 Take a general $\psi\in PSH(\Omega)$. The function $e^{\psi}$ is still plurisubharmonic an bounded below. For any differential test from above  $q$  for $e^{\psi}$ at the point $w\in\Omega$, $\log( q+\Vert z-w\Vert^2)$ is a differential test from above for $\psi$, except at points $w$ where $e^{\psi}$ vanishes. These points however constitute the set $\{\psi=-\infty\}$, which is thus pluripolar and hence of Lebesgue measure zero. Finally we can exhaust $\Omega$ with open sets $\Omega_{n}=\{z\in\Omega| e^{\psi(z)}<n\}$. Note that $e^{\psi}$ is a bounded plurisubharmonic function in $\Omega_{n}$. In particular the Monge-Amp\`{e}re operator is well-defined on it (see \cite{BT82}) and $(dd^c e^{\psi})^n$ can be interpreted as a non negative Borel measure.
		
		Denote by $E$, $E\subset\Omega$ the set where $\varphi$ allows a differential test from above. First we note that
			\begin{equation}\label{mink}(dd^{c}(r+\eta))^{n}-(dd^{c}r)^n\geq (dd^{c}\eta)^n\geq 0,\end{equation}
			as measures, for any plurisubharmonic $r$ and $\eta$ in the domain of definition of the Monge-Amp\`{e}re operator.
		
		
		If $\varphi$ is bounded on $\Omega$ then denote by $\omega_{0}:= \sup_{\tilde w\in \Omega}\varphi(\tilde w
		)-\inf_{w\in \Omega}\varphi(w)$ the oscillation of $\varphi$. For $\varepsilon>0$ we define $\Omega_{\varepsilon\sqrt{2\omega_0}}=\{z\in\Omega| dist (z,\partial\Omega)>\varepsilon\sqrt{2\omega_0}\}$ and
		$$\varphi_{\varepsilon}(z)=\sup_{w\in\Omega_{\varepsilon\sqrt{2\omega_0}}}\left(\varphi(w)-\frac{\Vert w-z\Vert^2}{2\varepsilon^2}\right).$$
		
		The supremum is attained at some point $\tilde z\in\Omega$ such that $\Vert z-\tilde z\Vert<\varepsilon\sqrt{2\omega_{0}}$, assuming that $dist (z,\partial\Omega)>\varepsilon\sqrt{2\omega_{0}}$. Hence for $z\in \Omega_{\varepsilon\sqrt{2\omega_0}}$ the supremum can be taken over the whole $\Omega$. It is a standard fact that the functions $\varphi_{\varepsilon}$ decrease to $\varphi$ as $\varepsilon\to 0^{+}$. Moreover they are  Lipshitz and $\varphi_{\varepsilon}(z)+k\Vert z\Vert ^2$ is a convex function in $\Omega_{\varepsilon\sqrt{2\omega_0}}$ for any $k\geq\frac{1}{2\varepsilon^2}$. The latter property means that $\varphi_{\varepsilon}$ is semi-convex and hence twice differentiable almost everywhere by the classical theorem of Alexandrov. Also   $\varphi_{\varepsilon}$ is plurisubharmonic in $\Omega_{\varepsilon\sqrt{2\omega_0}}$ (see \cite{EGZ11}) and hence $(dd^c\varphi_{\varepsilon})^{n}\geq 0$ as measure.
		
		
		
		Fix a ball $B(z,R)\subset \Omega$ with $R<dist(z,\partial \Omega)$. For $\varepsilon$ small enough we have $B(z,R)\subset\Omega_{\varepsilon\sqrt{2\omega_0}}$. We restrict further considerations to such small $\varepsilon$. For $k\in \mathbb N\setminus\{0\}$ let 
		
		$$\varphi_{k,\varepsilon}(w)=\varphi_{\varepsilon}-k (\Vert w-z\Vert^2-R^2).$$
		For any $k$, $\varphi_{k,\varepsilon}$ agrees with $\varphi_{\varepsilon}$ on $\partial B(z,R)$.

		Let $E_{k,\varepsilon}$ be the upper contact set of $\varphi_{k,\varepsilon}$, that is the set of points, where the graph of $\varphi_{k,\varepsilon}$ lies below a supporting hyperplane in $\mathbb R^{2n+1}$, $$E_{k,\varepsilon}=\{w\in B(z,r)| \varphi_{k,\varepsilon}(\tilde w)\leq \varphi_{k,\varepsilon}(w)+\langle p,\tilde w -w\rangle \},$$ for some $p\in \mathbb R^{2n}$, depending on $w$, and  any  $\tilde w  \in B(z,r)$ (here $\langle\cdot,\cdot\rangle$ denotes the real inner product in $\mathbb R^{2n}$ and $z$ and $w$ are regarded as points in $\mathbb R^{2n}=\mathbb R^{n}+i\mathbb R^{n}\cong  \CC$). The set $E_{k,\varepsilon}$ is relatively closed in $B(z,r)$ and hence measurable.  Note that by construction $-\varphi_{k,\varepsilon}$ would be convex on every open subset of  $E_{k,\varepsilon}$. Thus $(dd^{c} (-\varphi_{k,\varepsilon}))^{n}\geq 0$ as measure on $E_{k,\varepsilon}$. 
		
		Likewise we introduce the upper contact set of $\varphi-k(\Vert w-z\Vert^2-R^2)$:
		$$E_{k}=\{w\in B(z,r)| \varphi(\tilde w)-k(\Vert \tilde w-z\Vert^2-R^2)\leq \varphi(w)-k(\Vert w-z\Vert^2-R^2)+\langle p,\tilde w -w\rangle \}.$$
		Note that $E_{k}\subset E$, since by construction $q(\tilde w)=k(\Vert \tilde w-z\Vert^2-R^2)+\langle p,\tilde w-z \rangle$ is a differential test from above at $w\in E_{k}$. Also the sets $E_{k,\varepsilon}$ approximate $E_{k}$ as $\varepsilon\to 0^{+}$, hence $$\limsup_{\varepsilon\to 0^{+}}\lambda ^{2n}(E_{k,\varepsilon})\leq \lambda ^{2n}(E_{k})\leq \lambda ^{2n}(E\cap B(z,R)).$$
		
		 Using (\ref{mink}) we have	
		$$ (dd^c (-\varphi_{k,\varepsilon}))^n\leq (dd^c k(\Vert w-z\Vert^2-R^2))^n- (dd^c \varphi_{\varepsilon})^n\leq (dd^c k(\Vert w-z\Vert^2-R^2))^n=4^{n}n!k^{n}d\lambda^{2n}.$$
		Now by the Alexandrov maximum principle
		\begin{equation}
		\label{alexandrov}
		\sup _{B(z,R)} \varphi_{k,\varepsilon}\leq \sup_{\partial B(z,R)}\varphi_{k,\varepsilon}+Cdiam B(z,R)\left(\int_{E_{k,\varepsilon}}|\det D^2 (\varphi_{k,\varepsilon})|\right)^{\frac{1}{2n}}\end{equation}$$\leq \sup_{\partial B(z,R)}\varphi_{\varepsilon}+2C_{1}R\left(\int_{E_{k,\varepsilon}} fd\lambda^{2n}\right)^{\frac{1}{2n}}\leq \sup_{\partial B(z,R)}\varphi_{\varepsilon}+C_{2}Rk(\lambda^{2n}(E_{k,\varepsilon}))^{\frac{1}{2n}},$$
		where $\sqrt{f}d\lambda^{2n}=(dd^c (-\varphi_{k,\varepsilon}))^n$. The second inequality follows from the estimate (see \cite{Blo05}) $\det\left( \frac{\partial^2}{\partial z_{i}\partial \bar z_{j}}u\right)\geq 2^{-n} \sqrt{\det D^2 u}=2^{-n} \sqrt{|\det D^2 (-u)|}$ for any  $u$ at any point where it is twice differentiable and it's real Hessian is positive definite.  
		

		If $k$ is big enough (\ref{alexandrov}) gives us
		$$kR^2+\varphi(z)=\sup_{B(z,R)} \varphi_{k,\varepsilon}\leq \sup_{\partial B(z,R)}\varphi_{\varepsilon}+C_{2}Rk(\lambda^{2n}(E_{k,\varepsilon}))^{\frac{1}{2n}}.$$
		Hence passing with $\varepsilon \to 0^{+}$
		$$R^2\leq \frac{1}{k}\omega_{0}+CR(\lambda ^{2n}(E_{k}))^{1/(2n)}.$$ 
		Finally, $$\frac{\lambda^{2n}(E\cap B(z,R))}{\lambda^{2n}(B(z,R))}\geq\limsup_{k\to\infty}\frac{\lambda^{2n}(E_{k})}{\lambda^{2n}(B(z,R))}\geq \limsup_{k\to\infty}\frac{(\frac{R}{C}-\frac{\omega_{0}}{kCR})^{2n}}{\frac{\pi^n}{n!}R^{2n}}=\frac{n!}{\pi^nC^{2n}}>0.$$
		
		This means that the Lebesgue density at $z$ of the set $E$, that is $\lim_{R\to0^{+}}\frac{\lambda^{2n}(E\cap B(z,R))}{\lambda^{2n}(B(z,R))}$, is not $0$ for any $z\in\Omega$. On the other  hand  it must be $0$  almost everywhere in $\Omega\setminus E$ by the Lebesgue's density theorem. Now it is clear that $E$ is of full measure.
		\end{proof}
		
		Finally, we want to show a method of producing differential test from above for certain plurisubharmonic functions at points where no differentiability is assumed. 
		\begin{theorem}\label{blocki} Let $U\subset \CC$ be a domain that contains the ball $$B(z_{0},R)=\{z\in\CC|  \Vert z-z_{0}\Vert\leq R\}.$$ Assume that $u\in PSH(U)$ obeys the conditions:
			\begin{enumerate}
				\item For some $\Lambda>0$  it holds that $\Lambda d\lambda^{2n}\geq (dd^cu)^{n}\geq 0$ on $B(z_{0},R)$ as measures, where $\lambda^{2n}$ is the Lebesgue measure.
				\item There exists $\sigma>0$ such that  $u(z)\geq \sigma ||z-z_{0}||^2$ when $z\in B(z_{0},R)$ and $u(z_{0})=0$.
			\end{enumerate}
			Then, there exists a constant $c=c(n,\Lambda)$ such that $u(z)\leq \frac{c}{\sigma^{2n-1}}\Vert z-z_{0}\Vert^{2}$ for all 
			$z\in B(z_{0}, R)$.
		\end{theorem}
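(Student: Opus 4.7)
The plan is to adapt Caffarelli's real-variable argument to the complex setting via the Alexandrov-Bakelman-Pucci (ABP) maximum principle, with B\l ocki's inequality serving as the bridge between the complex and real Monge-Amp\`ere operators. Normalize $z_0=0$. The hypothesis $(dd^c u)^n\leq\Lambda\,d\lambda^{2n}$ translates into $\det(u_{i\bar j})\leq\Lambda/(4^n n!)$. At any point of twice differentiability where the real Hessian $D^2u\geq 0$, B\l ocki's inequality $\det(u_{i\bar j})\geq 2^{-n}\sqrt{\det D^2u}$ (recalled in the proof of Theorem~\ref{trudinger}) yields the real bound $\det D^2u\leq C_n\Lambda^2$. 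The desired form $u(z)\leq c(n,\Lambda)\Vert z-z_0\Vert^2/\sigma^{2n-1}$ is precisely the output of the real Caffarelli theorem applied in $\mathbb R^{2n}$ with real Monge-Amp\`ere mass bounded by $C_n\Lambda^2$; the exponent $2n-1$ arises from the standard scaling $\tau\sim\Vert f\Vert_\infty/\sigma^{m-1}$ of Caffarelli's estimate with $m=2n$.

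Concretely, I would pass to the (real) convex envelope $u^{**}$ of $u$ on $B(0,R)$, which inherits $u^{**}(0)=0$ and $u^{**}\geq\sigma\Vert z\Vert^2$ (since $\sigma\Vert z\Vert^2$ is convex and dominated by $u$). The Alexandrov Monge-Amp\`ere measure of $u^{**}$ is supported on the contact set $\{u=u^{**}\}$; on this set at points of twice differentiability one has $D^2u\geq D^2u^{**}\geq 0$, so B\l ocki's inequality applied there gives the key transfer $\det D^2u^{**}\leq\det D^2u\leq C_n\Lambda^2$ on the support of the envelope's MA measure. The real Caffarelli theorem applied to the convex $u^{**}$ in $\mathbb R^{2n}$ then produces the quadratic upper bound $u^{**}(z)\leq c(n,\Lambda)\Vert z\Vert^2/\sigma^{2n-1}$. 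The section controls $\{u^{**}\leq h\}\subset B(0,\sqrt{h/\sigma})$ (following from $u^{**}\geq\sigma\Vert z\Vert^2$) feed into the ABP estimate and produce the correct dependence on $\sigma$ and $\Lambda$.

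The main obstacle is that the above bounds $u^{**}$ rather than $u$; since $u^{**}\leq u$, the envelope bound does not transfer automatically. The resolution is to apply the ABP argument directly to $u$ via its upper contact set with paraboloids of opening $\tau$, in the spirit of Trudinger's method used in Theorem~\ref{trudinger}. One fixes $\tau>0$, considers the upper contact set of $u$ with paraboloids of opening $\tau$ (i.e., points where $u$ is dominated locally by a translate of $\tau\Vert z\Vert^2$), bounds the complex Monge-Amp\`ere mass of $\tau\Vert z\Vert^2-u$ via the Minkowski-type inequality~(\ref{mink}), and applies B\l ocki's inequality on this contact set to obtain a real Hessian bound. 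The real Alexandrov-Bakelman-Pucci inequality then yields a lower bound on the Lebesgue measure of the contact set, and optimizing $\tau$ against the section volume bound $|\{u\leq h\}|\leq c_n(h/\sigma)^n$ produces the claimed quadratic growth with the correct $\sigma^{-(2n-1)}$ exponent. Sidestepping the real theory of sections --- unavailable in the complex setting, as emphasized in the introduction --- through this upper-contact-set argument is the principal technical difficulty.
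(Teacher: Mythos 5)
Your plan does not close, and the two places where it breaks are precisely the quantitative heart of the statement. First, the reduction to the real Caffarelli lemma rests on the unproved assertion that that lemma holds with the explicit dependence $\tau\sim \Vert f\Vert_\infty/\sigma^{m-1}$. The quotable form of the result (the theorem recalled in the introduction, or Lemma 6.2.1 in \cite{Gut}) gives no explicit dependence on $\sigma$ and $\Vert f\Vert_\infty$, and the straightforward proof via the Aleksandrov maximum principle on the section $\{v<h\}\subset B(x_0,\sqrt{h/\sigma})$ yields only $\tau\sim\Vert f\Vert_\infty^{2}/\sigma^{2m-1}$, which in real dimension $m=2n$ (and with $\det D^2u\lesssim\Lambda^2$ coming from B\l ocki's pointwise inequality) gives $u\leq c(n,\Lambda)\Vert z-z_0\Vert^2/\sigma^{4n-1}$ --- strictly weaker than the claimed $\sigma^{-(2n-1)}$. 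The scaling you quote is that of the extremal example $\sigma\Vert x'\Vert^2+\tau x_m^2$, not of a theorem you may cite; without a proof of the sharp real estimate your route does not reach the stated exponent. Moreover, as you yourself note, the envelope version only bounds $u^{**}\leq u$, so it proves nothing about $u$; everything therefore hinges on your third paragraph, which is a sketch whose key step is missing. The ABP inequality applied to $u-\tau(\Vert w-z_0\Vert^2-R^2)$ on its upper contact set gives a \emph{lower} bound on the Lebesgue measure of that contact set (exactly as in the proof of Theorem \ref{trudinger}), but such a bound holds for every bounded plurisubharmonic function and does not by itself produce an upper bound for $u$ at points near $z_0$; you never explain how the contact-set measure, the sublevel-set volume bound $\lambda^{2n}(\{u\leq h\})\leq c_n(h/\sigma)^n$ and the desired pointwise inequality $u\leq\tau\Vert z-z_0\Vert^2$ are to be played against each other, and it is not clear that they can be. There is also a regularity gap: $u$ is only plurisubharmonic, so ``points of twice differentiability'' on the contact set need not exist; to use B\l ocki's pointwise inequality you must first pass to the sup-convolutions $u_\varepsilon$ (as is done in the proof of Theorem \ref{trudinger}) and then justify the limit $\varepsilon\to0^+$, a step absent from the sketch.

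For comparison, the paper avoids the real Monge--Amp\`ere machinery altogether. Setting $t_r=\inf_{\Vert z-z_0\Vert=r}u$, $T_r=\sup_{\Vert z-z_0\Vert=r}u$, it uses the hypothesis $u\geq\sigma\Vert z-z_0\Vert^2$ only through $t_r\geq\sigma r^2$ and the volume bound $Vol(\{u<h\})\leq c_n(h/\sigma)^n$, and then applies two $L^\infty$ estimates of B\l ocki for the \emph{complex} Monge--Amp\`ere operator: the $L^\infty$ bound on $\{u<t_r\}\cap B(z_0,r)$ gives $t_r\leq\tilde c r^2$, and the $L^1$-stability estimate on $B(z_0,r)$, combined with the co-area formula and the sublevel volume bound, gives the Harnack-type inequality $T_r-t_r\leq c_n\Lambda^2\tilde c_n^{2n}4^n r^2/\sigma^{2n-1}$; together these yield the theorem with the correct exponent. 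If you want to salvage your approach, you would either have to prove the real Caffarelli lemma with the sharp dependence on $\sigma$ and $\Vert f\Vert_\infty$ (and still repair the $u^{**}$ versus $u$ issue), or carry out the contact-set computation in full --- in its present form the proposal asserts the decisive estimate rather than deriving it.
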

		\begin{proof}
			Without loss of generality we assume that $z_{0}=0$. Denote by $T_{r}=\sup_{\Vert z\Vert =r}u(z)$ and by $t_{r}=\inf_{\Vert z\Vert =r}u(z)$ for any $0<r\leq R$. By the second condition one  has that
			\begin{equation}
			\label{equ1} t_{r}\geq\sigma r^{2}.
			\end{equation} Let also $U_{r}$ denote $\{z\in U\cap B(0,R)| u(z)<r\} $. Again by the second assumption
			\begin{equation} 
			\label{equ2}
			U_{r}\subset \{z\in U| \sigma \Vert z\Vert^2<r\}=B\left(0, \sqrt{\frac{r}{\sigma}}\right)\cap U.
			\end{equation}
			and hence 
			\begin{equation}
			\label{equ3}
			Vol(U_{r})\leq Vol \left(B\left(0, \sqrt{\frac{r}{\sigma}}\right)\right)=c_{n}\frac{r^{n}}{\sigma^{n}},
			\end{equation}
			where $c_{n}=\frac{\pi^n}{n!}$ is the volume of the unit ball in $\CC$. Observe also that if $\sqrt{\frac{r}{\sigma}}<R$, which is equivalent to $r<\sigma R^2$, then $U_{r}$ is relatively compact in $int B(0,R)$ and hence in $U$.
			
			We will use the following result from (\cite{Blo05}, Corollary $2$):
			Let $\Omega$ be a bounded domain in $\CC$. Assume that $v\in PSH(\Omega)$ has the boundary value zero ($v=0$ on $\partial\Omega$). Let also $(dd^c v)^{n}=fd\lambda^{2n}$. Then there exists a constant $\tilde c_{n}$ depending only on the dimension, so that
			\begin{equation}
			\label{equ4}
			\Vert v\Vert_{L^{\infty}(\Omega)}\leq \tilde c_{n}diam(\Omega)(Vol(\Omega))^{\frac{1}{2n}}\Vert f\Vert_{L^{\infty}(\Omega)}^{\frac{1}{n}}.
			\end{equation}
			The constant $\tilde c_{n}$ can be taken to be $\frac{2(n!)^{\frac{1}{2n}}}{\sqrt{\pi}}$. Note that in (\cite{Blo05}) there are additional assumptions on the regularity of $v$, namely that $v\in\mathcal C^{2}(\Omega)$, but they are not really necessary.
			
			
			We take $\Omega=U_{t_r}\cap B(0,r)$ and $v=u-t_r$. Since $u(0)=0$, $\Omega$ is nonempty and open in $B(0,r)$. Also since $u\geq t_r$ on $\partial B(0,r)$, we have   $v=0$ on $\partial \Omega$. Hence by (\ref{equ4}):
			\begin{equation}
			\label{equ5}
			t_r\leq \tilde c_{n}diam (\Omega) Vol(\Omega)^{\frac{1}{2n}} \Lambda^{\frac{1}{n}}\leq \tilde c_{n}2r (c_n r^{2n})^{\frac{1}{2n}} \Lambda^{\frac{1}{n}}
			\end{equation}
			Hence
			\begin{equation}
			\label{equ6} t_r\leq \tilde c r^{2},
			\end{equation}
			where $\tilde c=2\tilde c_{n}c_{n}^{1/2n}\Lambda^{1/n}$. In particular we see that if $\sigma> \tilde c$, then there are no functions $u$ satisfying the hypothesis of our theorem.

			We will also use (\cite{Blo05}, Proposition $3$, again the regularity assumptions are redundant) which says:
			
			If $\Omega\subset\CC$ is a bounded domain and $v\in PSH(\Omega)$ is negative, and such that $(dd^c v)^n=fd\lambda^{2n}$, then for any $a>0$, such that the level set $\{z\in \Omega| v(z)<\inf_{w\in\Omega} v(w) +a\}$ is nonempty and relatively compact in $\Omega$, then 
			\begin{equation}
			\label{equ7}
			\Vert v\Vert_{L^{\infty}(\Omega)}\leq a+ \left(\frac{\tilde c_{n}diam(\Omega)}{a}\right)^{2n}\Vert v\Vert_{L^{1}(\Omega)}\Vert f\Vert_{L^{\infty}(\Omega)}^{2}.
			\end{equation}
			The constant $\tilde c_{n}$ is the same as above.

			Now we consider $\Omega$ to be the open ball $int B(0,r)$ and we put $v=u-T_{r}$. Clearly $v$ is negative, $0\leq f\leq \Lambda$ and for any $0<a<t_{r}$, the set $\{z\in int B(0,r)| v(z)\leq -T_{r}+a\}$ is nonempty and relatively compact in $int B(0,r)$.

			Hence
			\begin{equation}
			\label{equ8}
			T_{r}=\Vert v\Vert_{L^{\infty}(B(0,r))}\leq a+ \left(\frac{\tilde c_{n}diam(B(0,r))}{a}\right)^{2n}\Vert v\Vert_{L^{1}(B(0,r))}\Vert f\Vert_{L^{\infty}(B(0,r))}^{2}\end{equation}
			$$\leq a+\left(\frac{\tilde c_{n}2r}{a}\right)^{2n}\Vert v\Vert_{L^{1}(B(0,r))}\Lambda^2.
			$$
			Now by using the co-area formula this can be further estimated by
			
			$$a+\left(\frac{\tilde c_{n}2r}{a}\right)^{2n}\int_{T_{r}-t_{r}}^{T_{r}}Vol(\{z\in B(0,r)|T_{r}-u>s\})ds\Lambda^2.$$
			Now by (\ref{equ3}) this is not greater than
			
			$$a+\left(\frac{\tilde c_{n}2r}{a}\right)^{2n}\int_{T_{r}-t_{r}}^{T_{r}}c_{n}\frac{(T_{r}-s)^n}{\sigma^n}ds\Lambda^2\leq a+c_{n}\left(\frac{\tilde c_{n}2r}{a}\right)^{2n}\frac{t_{r}^{n+1}}{\sigma^n}\Lambda^2.$$
			Passing with $a\to t_{r}^{-}$ we obtain in (\ref{equ8}):
			
			$$T_{r}-t_{r}\leq c_{n} \frac{\left(\tilde c_{n}2r\right)^{2n}} {\sigma^nt_{r}^{n-1}}\Lambda^2.$$
			Using (\ref{equ1}) this turns into
			
			$$\sigma^{n-1}r^{2n-2}(T_{r}-t_{r})\leq \frac{c_{n}\Lambda^{2}\tilde c_{n}^{2n}4^{n} r^{2n}}{\sigma^{n}}.$$
			Thus
			$$T_{r}\leq t_{r}+ \frac{c_{n}\Lambda^{2}\tilde c_{n}^{2n}4^{n} r^{2}}{\sigma^{2n-1}}\leq \frac{c}{\sigma^{2n-1}}r^2.$$
			Combining this with (\ref{equ6}) we get
			$$T_{r}\leq \left(\tilde c+\frac{c}{\sigma^{2n-1}}\right)r^2.$$
			For small enough $\sigma$, the term $\tilde c$ can be dropped.
			Now by manipulating $0<r<R$ we obtain the Theorem.
		\end{proof}
		
\medskip
{\bf Acknowledgments}
The first named author was partially supported by NCN grant no 2013/11/D/ST1/02599. The second named author was partially supported by  the Ideas Plus grant  0001/ID3/2014/63 of the Polish Ministry of Science
and Higher Education.

\end{document}